\documentclass[11pt]{article}

\usepackage[T1]{fontenc}
\usepackage[utf8]{inputenc}
\usepackage[english]{babel}
\usepackage{lmodern}
\usepackage[a4paper,margin=1in]{geometry}
\usepackage{amsmath,amssymb,amsthm,mathtools}
\usepackage{microtype}
\usepackage[hidelinks]{hyperref}
\usepackage{enumitem}

\allowdisplaybreaks
\newtheorem{theorem}{Theorem}[section]
\newtheorem{lemma}[theorem]{Lemma}
\newtheorem{corollary}[theorem]{Corollary}
\newtheorem{proposition}[theorem]{Proposition}
\newtheorem{conjecture}[theorem]{Conjecture}
\theoremstyle{definition}
\newtheorem{definition}[theorem]{Definition}
\newtheorem{construction}[theorem]{Construction}
\theoremstyle{remark}
\newtheorem{remark}[theorem]{Remark}

\newcommand{\mad}{\operatorname{mad}}
\newcommand{\dout}{d^{+}}
\newcommand{\pchi}{\vec{\chi}}
\newcommand{\pot}{\pi}
\newcommand{\ceil}[1]{\left\lceil #1\right\rceil}
\newcommand{\floor}[1]{\left\lfloor #1\right\rfloor}
\newcommand{\defect}{\operatorname{defect}}

\hypersetup{
  pdftitle={Bounded-Defect Proper Orientations and Graphs on Surfaces},
  pdfauthor={Qi Wu and Yong Lu},
  pdfsubject={Proper orientations, defective colorings, and graphs on surfaces},
  pdfkeywords={proper orientation, maximum average degree, defective coloring, graphs on surfaces}
}

\title{\Large Bounded-Defect Proper Orientations and Graphs on Surfaces}

\author{Qi Wu, Yong Lu\thanks{Corresponding author.}\\[2pt]
\small School of Mathematics and Statistics, Jiangsu Normal University,\\[-1pt]
\small Xuzhou, Jiangsu 221116, People's Republic of China\\[-1pt]
\small Emails:~\texttt{wuqimath@163.com, luyong@jsnu.edu.cn}}
\date{}

\begin{document}
\maketitle

\begin{abstract}
Chen, Mohar and Wu [J. Combin. Theory Ser. B 161 (2023)] conjectured that every graph of orientable genus $g$ has proper orientation number $O(\sqrt g)$. We prove this conjecture. Our main result extends the potential-outdegree method from independent color classes to three parts of bounded internal degree. Let $k,D\in\mathbb Z_{\ge0}$. If $G$ has maximum average degree at most $2k$ and $V(G)=V_1\cup V_2\cup V_3$, where each $V_i$ induces a subgraph of maximum degree at most $D$, then
$$
   \pchi(G)\le k+9D+6+
   \floor{\frac32\ceil{\frac{2D+2}{5}}}
   \le k+\floor{\frac{48D}{5}}+8.
$$
When the partition is given, we construct such an orientation in polynomial time. For every $D\ge1$, we also construct a graph with a partition into three parts of internal maximum degree at most $D$ for which $\pchi(G)-\ceil{\mad(G)/2}\ge6D+5$; hence the linear dependence on $D$ is unavoidable. Finally, Woodall's defective-coloring theorem yields $\pchi(G)=O(\sqrt{\gamma+1})$ for every graph of Euler genus $\gamma$.
\end{abstract}

\noindent\textbf{Keywords:} proper orientation; maximum average degree; defective coloring; graphs on surfaces.

\noindent\textbf{2020 Mathematics Subject Classification:} 05C15, 05C20, 05C10.

\section{Introduction}

All graphs in this paper are finite and simple. For $S\subseteq V(G)$,  write $G[S]$ for the subgraph induced by $S$ and $N_G(S)=\{v\in V(G)\setminus S:v\text{ has a neighbor in }S\}$. Let $N_G(v)=N_G(\{v\})$ and omit the subscript when the ambient graph is clear. For integers $a\le b$,  write $[a,b]=\{a,a+1,\ldots,b\}$.

Let $Q$ be an orientation of a graph $G$. Write $\dout_Q(v)$ for the outdegree of $v$ and $\Delta^+(Q)=\max_{v\in V(G)}\dout_Q(v)$. The orientation $Q$ is \emph{proper} if $\dout_Q(u)\ne\dout_Q(v)$ for every edge $uv\in E(G)$. The \emph{proper orientation number} of $G$ is $\pchi(G)=\min\{\Delta^+(Q):Q\text{ is a proper orientation of }G\}$. Some authors use indegrees instead. Reversing every arc shows that the two conventions give the same parameter.

Borowiecki, Grytczuk and Pil\'sniak~\cite{BorowieckiGrytczukPilsniak2012} proved that every graph has a proper orientation. Since the outdegrees in a proper orientation give a proper vertex coloring, $\chi(G)-1\le\pchi(G)\le\Delta(G)$. Proper orientations belong to the wider family of degree-distinguishing problems. Karo\'nski, {\L}uczak and Thomason~\cite{KaronskiLuczakThomason2004} proposed the 1-2-3 Conjecture, which Keusch~\cite{Keusch2024} later proved. Ahadi and Dehghan~\cite{AhadiDehghan2013} initiated the systematic study of the proper orientation number. The decision problem remains hard even on several sparse graph classes; see, for example,~\cite{AraujoCohenRezendeHavetMoura2015}. Earlier work treated bipartite and chordal graphs and several subclasses of outerplanar and planar graphs; see~\cite{AiEtAl2020,AraujoCohenRezendeHavetMoura2015,AraujoEtAlChordal2021,AraujoHavetSalesSilva2016,KnoxEtAl2017,Noguchi2020}.

For a nonempty graph $G$, its \emph{maximum average degree} is $\mad(G)=\max\{2|E(H)|/|V(H)|:\varnothing\ne H\subseteq G\}$, where $H$ ranges over the nonempty subgraphs of $G$;  set $\mad(\varnothing)=0$. Every orientation $Q$ of $G$ satisfies $\Delta^+(Q)\ge\ceil{\mad(G)/2}$. Chen, Mohar and Wu~\cite[Theorem~1.2]{ChenMoharWu2023} proved that every bipartite graph satisfies $$\ceil{\mad(G)/2}\le\pchi(G)\le\ceil{\mad(G)/2}+3.$$ They also obtained the bounds $14$, $11$ and $10$ for planar graphs, $3$-colorable planar graphs and outerplanar graphs, respectively~\cite[Theorems~1.4 and~1.5]{ChenMoharWu2023}. More generally, every $r$-colorable graph satisfies the following bound~\cite[Theorem~1.8]{ChenMoharWu2023}:
\begin{equation}
   \pchi(G)\le \frac12\mad(G)+
   O\!\left(\frac{r\log r}{\log\log r}\right).
   \label{eq:cmw-color}
\end{equation}
Their proof introduced potential outdegrees and combined fractional orientations with a weighted matching argument~\cite[Sections~2 and~3]{ChenMoharWu2023}.

Wang, Wang and Yu~\cite[Theorem~4]{WangWangYu2026} subsequently proved that every $3$-partite graph satisfies $\pchi(G)\le\ceil{\mad(G)/2}+7$. They~\cite[Section~2]{WangWangYu2026} also constructed $r$-partite graphs for which the additive term is at least $\floor{5r/2}-2$. Their proof of the upper bound retains the potential-outdegree and matching framework but avoids fractional orientations~\cite[Section~3]{WangWangYu2026}.

A partition $\mathcal P=(V_1,\ldots,V_s)$ of $V(G)$ is a \emph{$(D_1,\ldots,D_s)$-coloring} if $\Delta(G[V_i])\le D_i$ for every $i$. Write $\defect(\mathcal P)=\max_{1\le i\le s}\Delta(G[V_i])$;  this number is the \emph{defect} of $\mathcal P$. Thus $\mathcal P$ is an $s$-coloring with defect at most $D$ exactly when $\defect(\mathcal P)\le D$. Cowen, Cowen and Woodall~\cite{CowenCowenWoodall1986} proved that every planar graph has a $(2,2,2)$-coloring. Woodall~\cite{Woodall2011} established defective-coloring and defective-choosability bounds for graphs on surfaces; see also~\cite{ChoiEsperet2019}.

Each graph $G[V_i]$ is $(D+1)$-colorable, so refining the three defect-$D$ parts produces at most $3(D+1)$ independent sets. Theorem~5.1 of Chen, Mohar and Wu~\cite[Theorem~5.1]{ChenMoharWu2023} therefore already gives an additive term of order $D\log D/\log\log D$. Our contribution is to replace that superlinear dependence by an explicit linear term. The three original parts are kept and their internal edges are accounted for directly. The key point is a variable matching capacity that depends on the current degree of a selected vertex inside the active subgraph of its defect part.

For $D\in\mathbb Z_{\ge0}$, set $F(D)=9D+6+\floor{\frac32\ceil{(2D+2)/5}}$. Our main technical result is as follows.

\begin{theorem}\label{thm:defective-main}
Let $k,D\in\mathbb Z_{\ge0}$. Suppose that $\mad(G)\le2k$ and that $V(G)=V_1\cup V_2\cup V_3$ is a partition with $\Delta(G[V_i])\le D$ for $i=1,2,3$. Then $\pchi(G)\le k+F(D)\le k+\floor{48D/5}+8$. If the partition is given, we can find such an orientation in polynomial time.
\end{theorem}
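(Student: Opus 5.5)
We adapt the potential-outdegree framework of Chen, Mohar and Wu, in the matching-based form of Wang, Wang and Yu, to three parts that are not independent. First we fix targets. Since $\mad(G)\le 2k$, Hakimi's theorem (equivalently, a flow argument) gives an orientation of $G$ with every outdegree at most $k$. We assign the parts disjoint bands of admissible final outdegrees. Set $L=F(D)$ and split $[k,k+L]$ into residue-type windows: vertices of $V_1$ take final outdegrees in one window, those of $V_2$ in a second, and those of $V_3$ in a third. Across parts, conflicts then come only from collisions at the band boundaries, and we avoid these by letting each vertex move only in one direction, toward its part's band. Within a part, adjacent vertices may share a band, so internal edges need a separate treatment. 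Each vertex $v\in V_i$ has at most $D$ internal neighbours, and each of them can block at most one value. We therefore give each vertex a list of at least $D+1$ admissible outdegrees. This requirement is the source of the linear term $9D$: roughly $3D$ per part, counting the $D$ values blocked by internal neighbours and the slack needed for the reversals described next.

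Next we run an iterative procedure that processes vertices in an order compatible with the mad bound. We repeatedly peel off a vertex of small degree in the current active subgraph. Each processed vertex receives a potential outdegree, namely a value in its list avoiding the final values of its already processed neighbours. We then realize these potentials by reversing arcs. For arcs between different parts we use matchings, as in Wang--Wang--Yu. For internal arcs, the key step is a \emph{variable capacity}: when a vertex $v\in V_i$ is selected, it may gain or lose up to $c(v)$ arcs. Here $c(v)$ depends on $\deg_{G[V_i]\cap A}(v)$, where $A$ is the current active set, and it equals about $\ceil{(2D+2)/5}$ in the worst case. This is where the term $\floor{\tfrac32\ceil{(2D+2)/5}}$ comes from. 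We would then prove a Hall-type lemma stating that the bipartite auxiliary graph has a matching (or $b$-matching) saturating the selected vertices with these capacities. The proof would use the density bound $|E(H)|\le k|V(H)|$ for every subgraph $H$ together with $\Delta(G[V_i])\le D$.

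The main obstacle is this Hall condition. An internal edge $uv$ inside $V_i$ can be charged from both endpoints, and a naive argument double counts it and loses a factor of two. We would first try a discharging-style potential: charge each internal edge to the endpoint processed later, and show that the deficiency of any set $S$ of selected vertices is at most $\sum_{v\in S}(c(v)-\text{internal active degree})$. The fractional constant $2/5$ should come out of optimizing this trade-off. Finally, correctness follows by checking that each final outdegree lies in $[0,k+F(D)]$ and differs from all neighbours' outdegrees. Polynomial time follows because each step is a peeling step, a bipartite $b$-matching computation, or an orientation computed by flow. The inequality $F(D)\le k$-free bound $\floor{48D/5}+8$ is routine: it uses $\ceil{(2D+2)/5}\le (2D+6)/5$, and $9D+\tfrac32\cdot\tfrac{2D+6}{5}+6=\tfrac{48D}{5}+7.8$.
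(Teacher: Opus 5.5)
Your proposal does not yet prove the theorem. The step you call the main obstacle, the capacitated Hall condition, is left as something you ``would first try''. The route you suggest for it, discharging from $|E(H)|\le k|V(H)|$, is not what makes it hold.

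In the paper, density enters only once: through a base orientation $D_0$ with $\Delta^+(D_0)\le k$, and the invariant that every outedge at an unsettled vertex agrees with $D_0$. So a vertex settled at level $j$ has a budget of $j-k$ further prescribed outedges. The Hall condition is then forced by local search rather than proved. At level $j$:
\begin{itemize}
\item let $X$ be the unsettled vertices of the defect part $T$ with potential at least $j$, and $Y$ the unsettled vertices of the protected part $P$ with potential exactly $j$;
\item give $x\in X$ weight $\lambda(x)=\floor{(j-k-d_X(x))/r}$, where $d_X(x)=|N(x)\cap X|$ and $r$ is the number of color classes of $G[P]$, and give each $y\in Y$ weight $1$;
\item improve an independent set $A\subseteq X\cup Y$ by exchanges, then settle all of $A$ at level $j$.
\end{itemize}
A Hall violator inside one color class of $P$ is itself an improving exchange, so at termination Hall holds classwise. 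Summing over the $r$ classes gives a $b$-matching with capacity $r\lambda(a)$ at each $a\in A_X$. Together with the $d_X(a)$ edges prescribed toward active $T$-neighbours, this fits within $j-k$. Each unselected active $T$-vertex then either receives an inedge from an internal neighbour settled at level $j$, which can happen at most $D$ times, or loses at least $\lambda(x)+1$ potential.

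The surrounding framework also would not carry such a lemma. You cannot confine each part to a window of final outdegrees in $[k,k+L]$, since a vertex of degree below $k$ never reaches it. Nor can outdegrees be picked from lists of $D+1$ values and then ``realized by reversing arcs'': a reversal changes the outdegree of both endpoints. Nothing in your scheme plays the role of the invariant that every oriented edge has a settled endpoint, which is what makes settled outdegrees final.

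In the paper there are no per-part bands. Levels are used one at a time from $M=k+F(D)$ downward. In a protected round, vertices of $T$ and of $P$ are settled at the same level, and the vertices settled at each level form an independent set. All that is required is that after the last used level every unsettled vertex has potential below it; a greedy max-potential completion then finishes properly. Internal edges are handled first by three capping blocks of $D+1$ levels, one per part, using maximal independent sets of the active subgraph. Thus a surviving vertex receives an inedge from a distinct internal neighbour in every round in which it is active.

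Accordingly, your reading of the constants is off:
\begin{itemize}
\item $\ceil{(2D+2)/5}$ is not a capacity. It is the number $c$ of extra levels in the first protected block ($T=V_1$, $P=V_3$, $r=D+1$). There, each round without an internal inedge lowers the excess of a $V_1$-vertex over the current level by at least $5$ (for $c\ge2$), and the excess to absorb is $2(D+1)$.
\item $9D$ is a count of levels: $3(D+1)$ for capping, $D+c$ and $3D+1+\ceil{c/2}$ for the two protected blocks, and roughly $2(D+1)$ values above $k$ below the lowest used level, needed so that $\lambda\ge1$ when $r=2(D+1)$.
\end{itemize}

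Your use of Hakimi's theorem is fine. So is the final estimate $F(D)\le\floor{48D/5}+8$, obtained via $\ceil{(2D+2)/5}\le(2D+6)/5$ and integrality.
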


Taking $k=\ceil{\mad(G)/2}$ gives a bound depending only on $G$ and the partition. For $D=0$, Theorem~\ref{thm:defective-main} recovers the $+7$ bound of Wang, Wang and Yu~\cite[Theorem~4]{WangWangYu2026}. The first three values are $F(0)=7$, $F(1)=16$, and $F(2)=27$.

We adapt the weighted independent-set and capacitated Hall arguments of Chen, Mohar and Wu~\cite[Section~3]{ChenMoharWu2023} and Wang, Wang and Yu~\cite[Section~3]{WangWangYu2026}. Our capacity at a selected vertex varies with its current number of active neighbors in the same defect part. This dependence lets us retain the three-part structure instead of refining each part into many independent sets.

For the lower bound, we retain the forcers and the density-control lemma of Wang, Wang and Yu~\cite[Section~2]{WangWangYu2026}. The new part is a three-block core whose internal degree is exactly controlled, together with an equal-multiplicity count for the remaining outdegree values. The order of the error term cannot be improved.

\begin{theorem}\label{thm:lower}
For every integer $D\ge1$, there is a graph $G_D$ with a partition $V(G_D)=V_1\cup V_2\cup V_3$ satisfying $\Delta(G_D[V_i])\le D$ and $\pchi(G_D)-\ceil{\mad(G_D)/2}\ge6D+5$.
\end{theorem}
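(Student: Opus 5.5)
We prove Theorem~\ref{thm:lower} by an explicit construction in the spirit of Wang, Wang and Yu~\cite[Section~2]{WangWangYu2026}. The strategy has three parts. First, $\mad(G_D)$ is kept at most $2k$ for a chosen $k$, using their density-control lemma. Second, forcers pin down the outdegrees of designated vertices. Third, a counting argument shows that in any proper orientation some vertex has outdegree at least $k+6D+5$.

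Concretely, we take $k$ large compared with $D$ and build a \emph{core} $C$ from three blocks $B_1,B_2,B_3$, with $B_i\subseteq V_i$. Each block induces a $D$-regular graph, for instance a disjoint union of copies of $K_{D+1}$ or a bipartite $D$-regular graph. Every pair of blocks is joined completely, or by a dense regular bipartite graph, so that each core vertex has degree about $2k$. The forcers of~\cite{WangWangYu2026} are attached to core vertices. Each forcer is placed in a part different from its attachment vertex and has no neighbors inside its own part, so every $G[V_i]$ keeps maximum degree at most $D$. The forcers make every core vertex send almost all of its non-core edges inward, so that its outdegree is governed by its core edges. The density-control lemma then gives $\mad(G_D)\le 2k$: we verify that every subgraph has at most $k|V(H)|$ edges, checking separately the subgraphs inside the core and those that meet forcers.

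The lower bound on $\pchi$ comes from counting. Suppose that $Q$ is proper with $\Delta^+(Q)\le k+6D+4$. Core vertices inside one block $B_i$ may share an outdegree only if they are nonadjacent. Within $B_i$, each outdegree class therefore forms an independent set of $G[B_i]$. Vertices of different blocks are adjacent, so they must all take distinct values. The sum of outdegrees over the core equals $|E(C)|$ plus the forced contributions, and this determines the average outdegree. The \emph{equal-multiplicity} count arranges block sizes so that each outdegree value in a window of width $w$ can be used by at most a fixed number $m$ of core vertices. That forces the used values to spread over a window exceeding $6D+5$ above $k$: the three blocks together contribute $3(2D+1)$ spread from internal $D$-regularity, plus $2$ from the forcer offset. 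The constants will be tuned to reach exactly $6D+5$.

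The main obstacle is the equal-multiplicity count. It must hold for every proper orientation, not just a typical one, so we have to show rigorously that the outdegree multiset on the core cannot be concentrated near $k$. The plan is to combine the exact edge count with an upper bound on how many core vertices can take any single value, namely at most $\alpha(G[B_i])$ within one block and one block per value. An averaging, or convexity, argument then yields a vertex of outdegree at least $k+6D+5$. Verifying that the forcers do not destroy the $\mad$ bound while still fixing the required outdegrees is the secondary technical step, and it reuses~\cite[Section~2]{WangWangYu2026} almost verbatim.
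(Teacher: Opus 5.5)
\paragraph{Gap in the counting step.}
Your outline breaks down at the counting step. The forcers of Wang, Wang and Yu do not make a core vertex send its non-core edges inward; they only exclude an interval of outdegrees at their anchors. The two facts you plan to combine with the edge count are also too weak: different blocks use different values, and at most $\alpha(G[B_i])$ core vertices share a value. To see this, take each $B_i$ to be $m$ disjoint copies of $K_q$ with $q=D+1$, and join the blocks pairwise completely. If all outdegrees are at most $M'$, these facts only give $\sum_{u\in C}\dout(u)\le m\sum_{x=M'-3q+1}^{M'}x$. Comparing this with $|E(C)|=3m^2q^2+3m\binom q2$ forces merely $M'\ge mq+2q-1$, while $\ceil{\mad(C)/2}=mq+\ceil{(q-1)/2}$. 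So an averaging or convexity argument of this kind certifies an excess of only about $\tfrac32 D$. It certifies even less once the forcers raise $\mad(G)$. The tally ``$3(2D+1)$ plus $2$'' is not backed by any inequality.

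\paragraph{The missing idea.}
You need to use the forcers to make the set of available values rigid. In the paper, each anchor block carries three forcers:
\begin{itemize}
\item a basic forcer excluding $[1,p+1]$;
\item an interval forcer on $U_i$ excluding $[p+2,p+2q+1]$;
\item an interval forcer on $A_i^*\supseteq U_i$ excluding $[p+q,p+3q-1]$.
\end{itemize}
Hence every core vertex has outdegree in $\mathcal S=\{0\}\cup[p+3q,p+6q-2]$, which is exactly $3q$ values. Three pairwise complete blocks, each containing $K_q$, need at least $3q$ values. So every value of $\mathcal S$, including $0$, is used exactly $m$ times. The $m$ core vertices forced to have outdegree $0$ are what push $m\sum_{x\in\mathcal S}x$ below $|E(G[U^*])|$. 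With $m=18q-12$ and $p=(m+2)q-2$, the deficit is $m(2q-3)>0$. Meanwhile $\ceil{\mad(G)/2}=p$ comes from the density lemma together with the $K_{p,p(pM+1)}$ inside the basic forcer.

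\paragraph{Two smaller problems.}
First, the interval forcers contain copies of $K_{2q+2}$ minus an edge and interface cliques of order $2q$. So a forcer cannot sit inside one part without internal neighbors. It needs a genuine $3$-coloring of defect $q-1$ that uses all three colors, with the anchors' internal neighbors avoiding the anchors' color. Second, your alternative cores break the count. With bipartite $D$-regular blocks, a block needs only two values. With non-complete joins, different blocks may share values.
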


For $D\in\mathbb Z_{\ge0}$, let $f(D)$ be the least integer $c$ such that every graph admitting a $3$-coloring with defect at most $D$ satisfies $\pchi(G)\le\ceil{\mad(G)/2}+c$. Combining  Theorems~\ref{thm:defective-main} and~\ref{thm:lower}, we obtain  $6D+5\le f(D)\le F(D)$ for every $D\ge1$. For $D=0$, the current bounds are $5\le f(0)\le7$~\cite[Theorem~4 and Section~2]{WangWangYu2026}.

 Euler genus is used throughout. An orientable surface of genus $g$ has Euler genus $2g$, while a nonorientable surface of genus $g$ has Euler genus $g$. The Euler genus of a graph is the minimum Euler genus of a surface in which the graph embeds. Standard density estimates together with \eqref{eq:cmw-color} give an upper bound of order $\sqrt\gamma\log\gamma/\log\log\gamma$ for graphs of Euler genus $\gamma$. In the original formulation below, genus means orientable genus.

\begin{conjecture}[{Chen--Mohar--Wu~\cite[Conjecture~7.4]{ChenMoharWu2023}}]\label{conj:cmw}
If $G$ has orientable genus $g$, then $\pchi(G)=O(\sqrt g)$.
\end{conjecture}

Combining Theorem~\ref{thm:defective-main} with Woodall's defective-coloring theorem~\cite[Theorem~2(a)(i)]{Woodall2011}, we prove the conjectured order of magnitude.

\begin{theorem}\label{thm:surface}
There is an absolute constant $C$ such that every graph $G$ of Euler genus $\gamma$ satisfies $\pchi(G)\le C\sqrt{\gamma+1}$. More precisely, for $\gamma\ge1$, let $D_\gamma=\ceil{\max\{9,2+\sqrt{4\gamma+6}\}}$. Then $\pchi(G)\le\ceil{3+\tfrac12\sqrt{6\gamma}}+F(D_\gamma)$.
\end{theorem}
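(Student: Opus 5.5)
The plan is to apply Theorem~\ref{thm:defective-main} with $k=\ceil{3+\tfrac12\sqrt{6\gamma}}$ and $D=D_\gamma$. This needs two inputs: a density bound $\mad(G)\le 2k$ for graphs of Euler genus $\gamma$, and a partition of $V(G)$ into three parts, each of internal maximum degree at most $D_\gamma$. The case $\gamma=0$ is handled separately. Planar graphs have $\mad(G)<6$, so $k=3$ works. Cowen--Cowen--Woodall gives a $(2,2,2)$-coloring. Theorem~\ref{thm:defective-main} then yields $\pchi(G)\le 3+F(2)$, a constant, which covers the term $\sqrt{\gamma+1}$ in the first claim.

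\emph{Density.} Every subgraph $H$ of $G$ has Euler genus at most $\gamma$. By Euler's formula, if $|V(H)|=n\ge3$ then $|E(H)|\le 3n-6+3\gamma$. Hence $2|E(H)|/n\le 6+6(\gamma-2)/n$. Combine this with the trivial bound $2|E(H)|/n\le n-1$. If $n$ is small, the trivial bound is at most about $\tfrac12+\sqrt{6\gamma+\dots}$. If $n$ is large, the Euler bound is small. The crossover is at $n(n-7)\approx 6\gamma-12$, that is, $n\approx \tfrac{7+\sqrt{49+24\gamma-48}}{2}$. This gives $\mad(G)\le \tfrac{5+\sqrt{24\gamma+1}}{2}\le 6+\sqrt{6\gamma}$, the classical Heawood-type estimate. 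Therefore $\ceil{\mad(G)/2}\le \ceil{3+\tfrac12\sqrt{6\gamma}}=k$. I only need the cruder inequality $\mad(G)\le 2k$, so I will not chase the sharp constant. I will just verify $\tfrac{5+\sqrt{24\gamma+1}}2\le 6+\sqrt{6\gamma}$, which is immediate because $\sqrt{24\gamma+1}\le 2\sqrt{6\gamma}+1$.

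\emph{Defective coloring.} I will invoke Woodall's theorem~\cite[Theorem~2(a)(i)]{Woodall2011}. I take it to state that every graph embeddable on a surface of Euler genus $\gamma\ge1$ is $3$-colorable with defect at most $\max\{9,2+\sqrt{4\gamma+6}\}$. Since defects are integers, this gives defect at most $D_\gamma$. The main point to check is that the hypotheses and the parametrization (Euler genus versus Euler characteristic $2-\gamma$) in Woodall's statement match the quantity $D_\gamma$ as written. I expect this bookkeeping to be the main obstacle: translating Woodall's formula, which is stated in terms of the characteristic, into the form $2+\sqrt{4\gamma+6}$. With both inputs in hand, Theorem~\ref{thm:defective-main} gives $\pchi(G)\le k+F(D_\gamma)$.

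\emph{Asymptotics.} We have $k=O(\sqrt{\gamma+1})$ and $D_\gamma=O(\sqrt{\gamma+1})$. Moreover $F(D)\le \floor{48D/5}+8$ is linear in $D$. Hence the total is $O(\sqrt{\gamma+1})$, and with the $\gamma=0$ case this gives an absolute constant $C$. Since an orientable genus $g$ means Euler genus $2g$, Conjecture~\ref{conj:cmw} follows.
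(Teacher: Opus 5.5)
Your overall argument is the paper's: Woodall's defective $3$-coloring, an Euler-formula bound $\mad(G)\le 6+\sqrt{6\gamma}$, and Theorem~\ref{thm:defective-main} with $k=\ceil{3+\frac12\sqrt{6\gamma}}$ and $D=D_\gamma$. The paper quotes Woodall's result directly in Euler-genus form as Theorem~\ref{thm:woodall}, so the bookkeeping you flag is absorbed into the citation.

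The only structural difference is the case $\gamma=0$. The paper quotes the Chen--Mohar--Wu bound $\pchi(G)\le14$. You instead combine the Cowen--Cowen--Woodall $(2,2,2)$-coloring with Theorem~\ref{thm:defective-main} at $(k,D)=(3,2)$, which gives $\pchi(G)\le 3+F(2)=30$. This is valid and stays within the paper's own machinery; it only costs a worse constant.

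There is one false step in your density estimate. The intermediate claim $\mad(G)\le\frac{5+\sqrt{24\gamma+1}}{2}$ fails for $\gamma=1$. Take the triangular embedding of $K_6$ in the projective plane and stack a vertex into one face. The result has Euler genus $1$, $7$ vertices and $18$ edges, so $\mad(G)\ge 36/7>5=\frac{5+\sqrt{25}}{2}$.

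The error comes from the crossover argument. It implicitly assumes that the Euler bound $6+6(\gamma-2)/n$ is nonincreasing in $n$. That is false for $\gamma=1$, where $6-6/n$ increases towards $6$.

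The inequality you actually need still holds, so the repair is easy. For $\gamma\le2$, the Euler bound alone gives $2|E(H)|/n\le 6\le 6+\sqrt{6\gamma}$. Alternatively, use the split in Proposition~\ref{prop:mad-surface}, which works for every $\gamma\ge1$: use $n-1$ when $n\le\sqrt{6\gamma}$, and use $6-12/n+6\gamma/n$ otherwise. With this repair your proof is complete.
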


Since an orientable surface of genus $g$ has Euler genus $2g$, Theorem~\ref{thm:surface} implies Conjecture~\ref{conj:cmw}. The orientation tools are developed in Section~2 and the bounded-defect lemmas in Section~3. The upper bound is proved in Section~4. Section~5 contains the lower construction and the application to graphs on surfaces.

\section{Orientation tools}

\begin{definition}\label{def:partial-orientation}
A \emph{partial orientation} $p$ assigns a direction to some edges of $G$. Write $\dout_p(v)$ for the number of edges currently directed out of $v$ and $u_p(v)$ for the number of incident unoriented edges. The \emph{potential outdegree} of $v$ is $\pot_p(v)=\dout_p(v)+u_p(v)$. A vertex is \emph{settled} when all its incident edges have been oriented; its outdegree is then final. For an integer $j$, let $A_j(p)$ be the set of settled vertices of outdegree $j$. We call $j$ a \emph{level}, and  call it \emph{free} if $A_j(p)=\varnothing$. A \emph{prescription} assigns a direction to an incident unoriented edge of an unsettled vertex; we impose all prescriptions when that vertex is settled.
\end{definition}

When the ambient orientation is clear,  $\dout(v)$ is written instead of $\dout_Q(v)$. An oriented edge incident with $v$ is called an \emph{outedge} or an \emph{inedge} of $v$ according to its direction. If an ambient orientation is clear and $W\subseteq V(G)$, then $d_W^+(v)$ denotes the number of arcs from $v$ to vertices of $W$. If $H$ is a subgraph, $d_H^+(v)$ denotes the outdegree of $v$ in the restriction to $H$. Directing an unoriented edge $uv$ from $u$ to $v$ leaves $\pot_p(u)$ unchanged and decreases $\pot_p(v)$ by one.

\begin{lemma}[Hakimi~\cite{Hakimi1965}]\label{lem:hakimi}
Let $k\in\mathbb Z_{\ge0}$. A graph $G$ has an orientation $D_0$ with $\Delta^+(D_0)\le k$ if and only if $\mad(G)\le2k$.
\end{lemma}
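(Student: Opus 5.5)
The plan is to prove both directions separately, with the reverse direction as the main work.

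For necessity, suppose $D_0$ is an orientation with $\Delta^+(D_0)\le k$. Take any nonempty subgraph $H$. Each edge of $H$ is directed out of exactly one of its endpoints, and both endpoints lie in $H$. Therefore
\[
|E(H)|=\sum_{v\in V(H)} d^+_{H}(v)\le k|V(H)|.
\]
This gives $2|E(H)|/|V(H)|\le 2k$, and so $\mad(G)\le 2k$.

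For sufficiency, assume $\mad(G)\le 2k$. I would model the problem as a bipartite assignment and apply Hall's theorem in its capacitated form, or equivalently max-flow/min-cut. The bipartite graph $B$ has one side $E(G)$ and the other side $V(G)$, with each edge $e=uv$ joined to $u$ and to $v$. Each vertex gets capacity $k$. An orientation with $\Delta^+\le k$ is exactly an assignment of every edge to one of its endpoints (its tail) in which no vertex receives more than $k$ edges.

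By the capacitated Hall condition, such an assignment exists if and only if every set $F\subseteq E(G)$ satisfies $|F|\le k|N_B(F)|$. Here $N_B(F)$ is the set of endpoints of edges in $F$. Let $H$ be the subgraph with edge set $F$ and vertex set $N_B(F)$. Then $|F|=|E(H)|\le \tfrac12\mad(G)|V(H)|\le k|N_B(F)|$, so the condition holds and sufficiency follows.

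Two remarks complete the argument.
\begin{itemize}
\item The capacitated version of Hall's theorem reduces to the ordinary one by replacing each vertex with $k$ copies.
\item The orientation can be found in polynomial time by one max-flow computation: source to each edge-node with capacity $1$, edge-node to its two endpoints, and each vertex to the sink with capacity $k$. An integral flow of value $|E(G)|$ yields the orientation.
\end{itemize}

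The only real care needed is checking that the Hall condition depends only on the subgraph spanned by $F$. That is exactly the step above. The case $F=\varnothing$ is trivial, so the nonemptiness requirement in the definition of $\mad$ causes no problem.
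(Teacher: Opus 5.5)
Your proof is correct; the paper does not reprove this lemma but cites Hakimi. Your argument is the standard one the paper implicitly relies on: the capacitated Hall condition (Lemma~\ref{lem:capacitated-hall}) on the edge--vertex incidence graph, realized by the same edge-node/vertex-node flow network with sink capacities $k$ that the paper uses to compute a base orientation in the algorithmic part of the proof of Theorem~\ref{thm:defective-main}.
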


Whenever $\mad(G)\le2k$,  a fixed orientation $D_0$ with $\Delta^+(D_0)\le k$ is called a \emph{base orientation}.

 The following standard capacitated form of Hall's theorem will be used. Vertices of capacity zero may be deleted, so the statement is also the special case of the Hall--Ore theorem recorded by Wang, Wang and Yu.

\begin{lemma}[{Capacitated Hall--Ore theorem; see~\cite{Hall1935,Ore1956,Ore1957} and~\cite[Lemma~7]{WangWangYu2026}}]\label{lem:capacitated-hall}
Let $B$ be a bipartite graph with parts $X$ and $Y$, and let $b:Y\to\mathbb Z_{\ge0}$. There is a set $\mathcal M\subseteq E(B)$ such that every vertex of $X$ is incident with exactly one edge of $\mathcal M$ and every $y\in Y$ is incident with at most $b(y)$ edges of $\mathcal M$ if and only if $|S|\le\sum_{y\in N_B(S)}b(y)$ for every $S\subseteq X$.
\end{lemma}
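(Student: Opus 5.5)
The capacitated Hall--Ore statement of Lemma~\ref{lem:capacitated-hall} reduces to the ordinary Hall theorem by splitting each vertex into copies according to its capacity. I first dispose of necessity. If $\mathcal M$ exists and $S\subseteq X$, then each $x\in S$ is covered by exactly one edge of $\mathcal M$, and the other endpoint of that edge lies in $N_B(S)$. Each $y\in N_B(S)$ receives at most $b(y)$ of these edges. Counting the edges of $\mathcal M$ at $S$ therefore gives $|S|\le\sum_{y\in N_B(S)}b(y)$.

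For sufficiency, I form an auxiliary bipartite graph $B'$ with parts $X$ and $Y'$. Here $Y'$ contains $b(y)$ distinct copies $y^{(1)},\dots,y^{(b(y))}$ of each $y\in Y$, so a vertex of capacity zero contributes no copies. The vertex $x\in X$ is joined to every copy of $y$ whenever $xy\in E(B)$. With this construction, $N_{B'}(S)$ consists of all copies of the vertices in $N_B(S)$, and hence
\[
|N_{B'}(S)|=\sum_{y\in N_B(S)}b(y)\ge|S|
\]
for every $S\subseteq X$. Hall's theorem~\cite{Hall1935} then gives a matching $\mathcal M'$ of $B'$ that saturates $X$.

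Next I project $\mathcal M'$ back to $B$ by replacing each edge $xy^{(i)}$ with $xy$. Every $x\in X$ lies in exactly one of the resulting edges. Two distinct $x$ cannot produce the same edge $xy$, since $x$ is one of its endpoints, so the projection is injective and $\mathcal M$ is a set. The copies of $y$ are matched to distinct vertices of $X$, so $y$ is incident with at most $b(y)$ edges of $\mathcal M$.

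The argument is routine, and the only step needing care is the bookkeeping for zero capacities. I handle this by noting, as the paper does, that such vertices can be deleted without changing any of the sums.
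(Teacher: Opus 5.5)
Your proof is correct. Necessity follows by counting the $\mathcal M$-edges at $S$. Sufficiency follows by cloning each $y$ into $b(y)$ copies and applying Hall's theorem, and your projection step correctly checks that $\mathcal M$ is a set and respects the capacities.

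The paper gives no proof of its own. It cites the lemma as the standard Hall--Ore theorem, noting that zero-capacity vertices may be deleted to reach the version in Wang--Wang--Yu. It mentions the max-flow reduction only as the way to compute $\mathcal M$. Your argument is the usual textbook proof of that cited result.

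Your construction already handles $b(y)=0$ automatically, since such vertices receive no copies. The closing remark about deleting them is therefore unnecessary.
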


The set $\mathcal M$ can be found by the standard maximum-flow reduction in which the vertices of $X$ have demand one and each $y\in Y$ has capacity $b(y)$.

With the notation of Definition~\ref{def:partial-orientation}, assume from now on that $\mad(G)\le2k$, and fix a base orientation $D_0$ given by Lemma~\ref{lem:hakimi}. The following conditions are maintained throughout the construction.
\begin{enumerate}[label=(P\arabic*)]
\item\label{inv:settled-edge} Every oriented edge has at least one settled endpoint.
\item\label{inv:base} If $v$ is unsettled, then every edge currently directed out of $v$ has the same direction in $D_0$. In particular, $\dout_p(v)\le k$.
\item\label{inv:levels} For every integer $j$, the set $A_j(p)$ is independent.
\end{enumerate}

The next lemma isolates the settlement operation used in the level constructions of Chen, Mohar and Wu~\cite[Section~3]{ChenMoharWu2023} and Wang, Wang and Yu~\cite[Section~3]{WangWangYu2026}.

\begin{lemma}[{Settlement step; cf.~\cite[Section~3]{ChenMoharWu2023} and~\cite[Section~3]{WangWangYu2026}}]\label{lem:fix}
Let $A$ be an independent set of unsettled vertices such that $\pot_p(a)\ge j$ for every $a\in A$, and suppose that $A\cup A_j(p)$ is independent. For each $a\in A$, let $F_a$ be a set of at most $B$ unoriented edges incident with $a$. If $j\ge k+B$, then we may direct the edges in $F_a$ out of $a$ and settle every vertex of $A$ with outdegree $j$, while preserving~\ref{inv:settled-edge}--\ref{inv:levels}.
\end{lemma}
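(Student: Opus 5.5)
The plan is to settle each vertex $a\in A$ independently by choosing which of its unoriented edges to direct out, so that its final outdegree is exactly $j$. Afterwards I check \ref{inv:settled-edge}--\ref{inv:levels}. Since $A$ is independent, no unoriented edge joins two vertices of $A$. Hence the choices made at different vertices of $A$ involve disjoint edge sets and do not interact, and it suffices to treat a single $a\in A$.

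Fix $a\in A$. Let $O_a$ be the set of unoriented edges $aw$ that are directed from $a$ to $w$ in $D_0$. I direct every edge of $F_a\cup O_a$ out of $a$; these are the \emph{forced} outedges. The key counting step is that the forced part is not too large. By \ref{inv:base}, every edge currently directed out of the unsettled vertex $a$ agrees with $D_0$. So these edges together with $O_a$ are distinct outedges of $a$ in $D_0$, and $\dout_p(a)+|O_a|\le\dout_{D_0}(a)\le k$. Therefore
\[
\dout_p(a)+|F_a\cup O_a|\le k+B\le j.
\]
On the other side, $\dout_p(a)+u_p(a)=\pot_p(a)\ge j$. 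So I can direct exactly $j-\dout_p(a)-|F_a\cup O_a|\ge0$ of the remaining unoriented edges at $a$ out of $a$, and direct all the others into $a$. Then $a$ becomes settled with outdegree exactly $j$. Every edge of $a$ to a settled vertex is already oriented, so only edges to unsettled vertices are affected.

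Now I verify the invariants.
\begin{itemize}
\item \ref{inv:settled-edge}: every newly oriented edge has the now settled endpoint $a$.
\item \ref{inv:base}: consider an unsettled neighbour $w$. An edge newly directed into $a$ is an outedge of $w$, and it lies outside $O_a$. So it is directed $w\to a$ in $D_0$, and the new outedges of $w$ agree with $D_0$. Edges directed $a\to w$ only add inedges of $w$; they lower $\pot_p(w)$ but leave \ref{inv:base} intact.
\item \ref{inv:levels}: the only level that changes is $j$, whose new set is $A_j(p)\cup A$. This set is independent by hypothesis.
\end{itemize}

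The step I expect to need the most care is the interaction with prescriptions. Prescriptions attached to $a$ itself should be counted inside $F_a$; this is how I read the hypothesis, with $F_a$ including them. Prescriptions attached to an unsettled neighbour $w$ of $a$ could concern an edge $aw$, and such an edge gets oriented now. I would handle this by also placing into $F_a$ any edge $aw$ that $w$ prescribes out of $a$. I would then treat an edge that $w$ prescribes out of $w$ as a $D_0$-consistent outedge of $w$, as \ref{inv:base} demands for the unsettled endpoint. This has to be checked against how prescriptions are created later in the construction. If the later use guarantees that prescriptions are always among the $F_a$, the argument above is complete as written.
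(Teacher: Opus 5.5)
Your proof is correct and matches the paper's argument: direct $F_a$ and the unoriented $D_0$-outedges out of $a$, use \ref{inv:base} to bound these forced outedges by $k+B\le j$, use $\pot_p(a)\ge j$ to reach outdegree exactly $j$, and get \ref{inv:base} at an unsettled neighbour because any edge directed into $a$ is not a $D_0$-outedge of $a$. Your worry about prescriptions held by unsettled neighbours is unnecessary: in the paper, prescriptions are always made by vertices settled in that same step and are exactly the sets $F_a$ together with the $D_0$-outedges, which is why the paper only notes that they do not conflict because $A$ is independent.
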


\begin{proof}
We use the base-orientation bookkeeping from the cited level constructions~\cite[Section~3]{ChenMoharWu2023} and~\cite[Section~3]{WangWangYu2026}. Fix $a\in A$. By~\ref{inv:base}, the edges already directed out of $a$, together with the unoriented edges directed out of $a$ in $D_0$, form a set of at most $k$ edges. Direct the latter edges out of $a$, and also direct every edge of $F_a$ out of $a$. After removing repetitions, we have prescribed at most $k+B\le j$ outedges.

Let $q$ be the number of edges already directed or prescribed out of $a$, and let $u$ be the number of incident edges that remain unoriented. Then $q\le j$ and $q+u=\pot_p(a)\ge j$. Direct exactly $j-q$ of these edges out of $a$ and all the others into $a$. The final outdegree of $a$ is $j$.

The prescriptions do not conflict because $A$ is independent. Every newly oriented edge has a settled endpoint. If an unsettled neighbor $x$ receives a new outedge $xa$, then $ax$ was not directed out of $a$ in $D_0$, so $xa$ agrees with $D_0$. Finally, $A\cup A_j(p)$ is independent. Hence~\ref{inv:settled-edge}--\ref{inv:levels} are preserved.
\end{proof}

We use the following completion argument at the end of the construction.

\begin{lemma}[{Greedy completion; cf.~\cite[Lemma~3.2]{ChenMoharWu2023} and~\cite[Section~3]{WangWangYu2026}}]\label{lem:greedy-completion}
Suppose that a partial orientation satisfies~\ref{inv:settled-edge} and~\ref{inv:levels}. Assume that every settled vertex has outdegree at least $L+1$ and every unsettled vertex has potential at most $L$. Then the partial orientation extends to a proper orientation with maximum outdegree at most $\max\{L,\max\{\dout_p(v):v\text{ is settled}\}\}$, where the second maximum is omitted if no vertex is settled.
\end{lemma}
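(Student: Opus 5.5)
The plan is to settle the unsettled vertices one at a time in a carefully chosen order: always settle a vertex whose potential is as small as possible. This is the classical greedy argument for defective/degenerate colorings, transplanted to potentials.

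\textbf{Invariant.} Alongside~\ref{inv:settled-edge} and~\ref{inv:levels}, we maintain:
\begin{itemize}
\item every settled vertex has outdegree at least $L+1$ and at most its previous value (in fact unchanged);
\item every unsettled vertex has potential at most $L$.
\end{itemize}
The only danger is that all levels in $[0,\pot_p(v)]$ are blocked for the chosen vertex $v$. Note that levels occupied by settled neighbors of $v$ of outdegree $\ge L+1$ never block a level $\le L$. So only neighbors settled during the completion can block.

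\textbf{Settling step.} Pick an unsettled vertex $v$ of minimum potential $\pi=\pot_p(v)$. Its unoriented edges all go to unsettled vertices, by~\ref{inv:settled-edge} and the fact that settled vertices have all edges oriented. Let $u=u_p(v)$. Orienting $t$ of these $u$ edges out of $v$ and the rest in gives $v$ any outdegree in $[\dout_p(v),\pi]$. For each neighbor $w$ joined by an edge oriented into $v$, $\pot(w)$ is unchanged; for each edge oriented out of $v$ toward $w$, $\pot(w)$ drops by one. Both keep unsettled potentials at most $L$.

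\textbf{Counting blocked levels.} The new outdegree $j$ must avoid the outdegrees of already-settled neighbors of $v$; unsettled neighbors will be handled later. Here I will instead reverse the order of thought: settle vertices in order of \emph{decreasing} potential, or more cleanly, use the standard trick from~\cite[Lemma~3.2]{ChenMoharWu2023}. Choose $v$ minimizing potential and set $\dout(v)=\pi$ by orienting every remaining unoriented edge out of $v$.

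Then each unsettled neighbor $w$ loses one from its potential. Since $\pot(w)\ge\pi$ before, we get $\pot(w)\ge\pi-1$ after. Could it equal $\pi$ and later also settle at level $\pi$, creating a conflict? Only if $\pot(w)$ was $\pi+1$ and it later settles at $\pi$. But $w$ is adjacent to $v$ with the edge directed $v\to w$, which is an inedge for $w$.

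To avoid this issue, we settle $v$ with outdegree exactly $\pi$, and note the following. Any neighbor $w$ later settled gets outdegree equal to its potential at that time. Its potential is at most its current value, which is at most $\pot_{\text{before}}(w)-1$ when $w$ had an unoriented edge to $v$. Hence a later-settled neighbor could still equal $\pi$ only if $\pot_{\text{before}}(w)\ge\pi+1$.

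\textbf{Main obstacle.} This is exactly the point to fix. The remedy is to process vertices in order of \emph{nondecreasing} final outdegree and argue monotonicity. Potentials only decrease, and the minimum potential is nondecreasing among the settled sequence only if settled values are distinct along edges. I expect the clean version to be the following: at each step, settle all unsettled vertices of minimum potential $\pi$ simultaneously as an independent set.

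Vertices of equal minimum potential that are adjacent must be separated. Orient an edge between two such vertices from one to the other; this drops one endpoint's potential to $\pi-1$, contradicting minimality, so we recurse. Formally, repeatedly take $v$ of minimum potential and orient all its unoriented edges out of $v$. Its unsettled neighbors then have potential at least $\pi$ minus one, and strictly decrease. Induction on the number of unsettled vertices shows that the final outdegrees along each edge $vw$ with $v$ settled first satisfy $\dout(w)<$ the potential $w$ had at that moment, and hence differ from $\pi$ whenever $\pot(w)\le\pi$.

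Settled-before neighbors had outdegree either at least $L+1>\pi$ or were settled at a value at most $\pi$. In the latter case they differ by the same argument applied at their time. Finally, the maximum outdegree is bounded by $L$ for completion-settled vertices and by the old maxima for the others.
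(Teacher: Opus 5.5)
There is a genuine gap. The rule you finally adopt is: repeatedly pick an unsettled vertex $v$ of \emph{minimum} potential $\pi$ and direct all its unoriented edges out of $v$. This rule does not yield a proper orientation. Minimality gives $\pot_p(w)\ge\pi$ for every unsettled neighbor $w$. After $vw$ is directed into $w$, you only know $\pot_p(w)\ge\pi-1$, so $w$ can later be settled at exactly $\pi$. Your closing induction (``differ from $\pi$ whenever $\pot(w)\le\pi$'') needs the inequality in the opposite direction, and minimality never supplies it. The same defect affects your treatment of neighbors settled earlier in the completion.

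Concretely, take the path $v_1v_2v_3v_4$ with no settled vertex and $L=2$. The hypotheses hold, and the potentials are $1,2,2,1$. Settling $v_1$ (by symmetry, $v_4$) at outdegree $1$ lowers $\pot(v_2)$ to $1$. Whichever minimum-potential vertex you choose next, every run ends with two adjacent vertices of outdegree $1$. The simultaneous-independent-set variant you sketch is not developed far enough to repair this. Also, ``every settled vertex has outdegree at least $L+1$'' cannot be an invariant of the completion, since the newly settled vertices have outdegree at most $L$.

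The missing idea is the one you mention in passing and then drop: choose a vertex of \emph{maximum} potential, as the paper does. Then every unsettled neighbor $w$ of $v$ has $\pot_p(w)\le\pi$ at that moment. It drops to at most $\pi-1$ when $vw$ is directed into $w$, and potentials never increase afterwards. Hence, along every edge between two vertices settled during the completion, the earlier vertex has strictly larger outdegree. The remaining cases are as you indicate. Newly settled vertices have outdegree at most $L$, previously settled ones have at least $L+1$, and two adjacent previously settled vertices differ by~\ref{inv:levels}.
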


\begin{proof}
We use the integer greedy completion appearing in the cited arguments~\cite[Lemma~3.2]{ChenMoharWu2023} and~\cite[Section~3]{WangWangYu2026}. While an unsettled vertex remains, choose a vertex $v$ of maximum potential, direct every still unoriented edge incident with $v$ out of $v$, and settle $v$. Its final outdegree is the potential it had when chosen and is at most $L$.

Consider an edge $uv$ whose endpoints are settled by this procedure, with $u$ chosen before $v$. Immediately before $u$ is settled, the edge $uv$ is unoriented by~\ref{inv:settled-edge}; we then direct it from $u$ to $v$. If $\alpha$ is the potential of $u$ at that moment, the potential of $v$ is at most $\alpha$ and drops by one. It never increases afterwards, so $\dout(v)\le\alpha-1<\alpha=\dout(u)$. Thus adjacent vertices settled by the procedure receive different outdegrees. A previously settled vertex has outdegree at least $L+1$, while every newly settled vertex has outdegree at most $L$. Two previously settled adjacent vertices have different outdegrees by~\ref{inv:levels}. Hence the completed orientation is proper.
\end{proof}

\section{Bounded-defect lemmas}

We begin with a bounded-defect version of the gap-capping procedure of Chen, Mohar and Wu~\cite[Lemma~3.4]{ChenMoharWu2023}.

\begin{lemma}[{Bounded-defect gap capping; cf.~\cite[Lemma~3.4]{ChenMoharWu2023}}]\label{lem:block-capping}
Let $T\subseteq V(G)$ satisfy $\Delta(G[T])\le D$. Suppose that the levels $J,J-1,\ldots,J-D$ are free and that $J-D\ge k+D$. We can use these levels so that every unsettled vertex $x\in T$ has potential at most $J-D-1$ afterwards.
\end{lemma}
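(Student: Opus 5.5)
We run a top-down sweep over the $D+1$ free levels. Each step settles a maximal independent set of high-potential vertices of $T$, and every unchosen high-potential vertex is charged by forcing one of its $T$-edges into it. All settlements go through Lemma~\ref{lem:fix} with $B=D$. The degree bound $\Delta(G[T])\le D$ then limits how many times a vertex can be skipped.

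\emph{The sweep.} For $t=0,1,\ldots,D$ in this order, set $j=J-t$ and do the following.
\begin{enumerate}[label=(\roman*)]
\item Let $X_t$ be the set of currently unsettled vertices $x\in T$ with $\pot_p(x)\ge j$.
\item Choose a maximal independent set $A_t$ of $G[X_t]$.
\item For $a\in A_t$, let $F_a$ be the set of unoriented edges $ax$ with $x\in X_t\setminus A_t$. Then $|F_a|\le\deg_{G[T]}(a)\le D$. These edges are unoriented because $x$ is unsettled and, by~\ref{inv:settled-edge}, every oriented edge has a settled endpoint; $a$ is unsettled too.
\item Apply Lemma~\ref{lem:fix} to $A_t$ at level $j$ with $B=D$.
\end{enumerate}
The hypotheses of Lemma~\ref{lem:fix} hold at each step:
\begin{itemize}
\item $A_t$ is independent and consists of unsettled vertices.
\item $\pot_p(a)\ge j$ for all $a\in A_t$, by the definition of $X_t$.
\item The level $J-t$ is still free when it is used, since earlier steps used only higher levels. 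Hence $A_t\cup A_j(p)=A_t$ is independent.
\item $j\ge J-D\ge k+D$.
\end{itemize}
So every vertex of $A_t$ is settled at level $J-t$ and the invariants~\ref{inv:settled-edge}--\ref{inv:levels} are preserved. Moreover, every $x\in X_t\setminus A_t$ has, by maximality, a neighbor $a\in A_t$ inside $T$. The edge $ax$ lies in $F_a$ and is directed from $a$ to $x$, so $\pot_p(x)$ drops by at least one during step $t$. Potentials never increase.

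\emph{The counting argument.} Let $x\in T$ be unsettled after the sweep. Then $x\notin A_t$ for every $t$.
\begin{itemize}
\item Suppose $x\in X_t$ for every $t=0,\ldots,D$. At each step $x$ acquires a new inedge from a $T$-neighbor settled at that step. These $D+1$ neighbors are pairwise distinct, since a vertex is settled only once. This contradicts $\Delta(G[T])\le D$.
\item Otherwise, let $t^*$ be the largest index with $x\notin X_{t^*}$. At step $t^*$ we have $\pot_p(x)\le J-t^*-1$. In each later step $t=t^*+1,\ldots,D$ we have $x\in X_t\setminus A_t$, so its potential drops by at least one. Hence the final potential is at most $J-t^*-1-(D-t^*)=J-D-1$, as required.
\end{itemize}

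\emph{Main obstacle.} The delicate point is that the initial potentials of vertices of $T$ may be arbitrarily large. One therefore cannot simply assume that potentials are already at most $J-t$ before step $t$. The argument via the last index $t^*$ avoids this: it needs only that $X_t$ is recomputed at each step and that every skipped vertex is charged to a distinct $T$-neighbor. All newly directed edges are either forced out of vertices being settled or chosen by Lemma~\ref{lem:fix}, so~\ref{inv:base} is maintained by that lemma.
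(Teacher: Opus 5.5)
Your proof is correct and follows the paper's argument. Both use the same top-down sweep, settling a maximal independent set of the active set at each level via Lemma~\ref{lem:fix} with $B=D$, and the same pigeonhole on distinct $T$-neighbors to find a round in which $x$ is inactive. The only difference is cosmetic: you take the \emph{last} inactive round and count one forced drop in each later round, whereas the paper takes the \emph{first} inactive round and carries the bound down by induction; both give $J-D-1$.
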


\begin{proof}
We follow the level-by-level capping scheme of Chen, Mohar and Wu~\cite[Lemma~3.4]{ChenMoharWu2023}, replacing an independent color class by maximal independent sets in the successive active subgraphs. For $j=J,J-1,\ldots,J-D$, let $X_j=\{x\in T:x\text{ is unsettled and }\pot_p(x)\ge j\}$. Choose a maximal independent set $I_j$ in $G[X_j]$. For every $a\in I_j$, prescribe each edge $ax$ with $x\in X_j\setminus I_j$ to point from $a$ to $x$. There are at most $D$ such edges at $a$. Since $j\ge k+D$, Lemma~\ref{lem:fix} settles $I_j$ at level $j$.

Let $x\in T$ remain unsettled after all $D+1$ rounds. Whenever $x\in X_j$, maximality of $I_j$ gives a neighbor of $x$ in $I_j$, and $x$ receives an inedge in that round. These neighbors are distinct in different rounds. Since $x$ has at most $D$ neighbors in $T$, there is a round in which $x\notin X_j$.

Let $J-t$ be the first such level. After that round, $\pot_p(x)\le J-t-1$. We prove by induction that, for every $s$ with $t\le s\le D$, the potential after the round at level $J-s$ is at most $J-s-1$. The assertion holds for $s=t$. Suppose that $s<D$ and consider the next round, at level $J-s-1$. Before this round, $\pot_p(x)\le J-s-1$. If $x\notin X_{J-s-1}$, integrality gives $\pot_p(x)\le J-s-2$. If $x\in X_{J-s-1}$, then $x$ receives an inedge from $I_{J-s-1}$, and its potential again becomes at most $J-s-2$. This proves the induction step. Taking $s=D$ gives the result.
\end{proof}

We adapt the weighted independent-set mechanism developed by Chen, Mohar and Wu~\cite[Section~3]{ChenMoharWu2023} and refined by Wang, Wang and Yu~\cite[Section~3]{WangWangYu2026}. In contrast with the independent-part setting, the capacity at a selected vertex depends on its degree in the current active subgraph of the defect part.

\begin{lemma}[{Variable-load protected level; cf.~\cite[Lemma~3.5]{ChenMoharWu2023} and~\cite[Section~3]{WangWangYu2026}}]\label{lem:variable-level}
Let $T,P\subseteq V(G)$ be disjoint, let $\Delta(G[T])\le D$, and fix a proper coloring $P=P_1\cup\cdots\cup P_r$ with $r\ge1$. Let $j$ be a free level. Assume that every unsettled vertex of $P$ has potential at most $j$ and that $j\ge k+D$.

Set
\[
   X=\{x\in T:x\text{ is unsettled and }\pot_p(x)\ge j\},\qquad
   Y=\{y\in P:y\text{ is unsettled and }\pot_p(y)=j\}.
\]
The set $X$ is the active set of $T$ at level $j$. For $x\in X$, set $d_X(x)=|N(x)\cap X|$ and $\lambda(x)=\floor{(j-k-d_X(x))/r}$. Then we can use level $j$ so that:
\begin{enumerate}[label=(\roman*)]
\item\label{item:P-drop} every unsettled vertex of $P$ has potential at most $j-1$;
\item\label{item:T-drop} if $x\in X$ remains unsettled, then either $x$ receives an inedge from a vertex of $T$ settled at level $j$, or its potential decreases by at least $\lambda(x)+1$.
\end{enumerate}
Moreover,~\ref{inv:settled-edge}--\ref{inv:levels} remain valid, and the operation can be carried out in polynomial time from the given coloring of $G[P]$.
\end{lemma}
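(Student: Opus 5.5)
The plan is to settle at level $j$ one independent set $S\subseteq X\cup Y$, built iteratively, and to feed it into Lemma~\ref{lem:fix} with $B=j-k$. The settled set will consist of two parts. The first is a set $I\subseteq X$, maximal independent in the current active subgraph of $G[X]$. The second is a set $Z\subseteq Y$, a union of pieces of single colour classes $P_i$. Every $y\in Y\setminus Z$ must receive an inedge from some settled vertex; this gives~\ref{item:P-drop}, since vertices of $P$ with potential below $j$ are already at most $j-1$. For $a\in I$, the prescribed set $F_a$ will consist of the at most $d_X(a)$ edges to $X\setminus I$, which make maximality of $I$ yield the first alternative in~\ref{item:T-drop}. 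On top of these, $F_a$ contains at most $j-k-d_X(a)$ edges towards $Y$, so $|F_a|\le j-k$ and Lemma~\ref{lem:fix} applies since $j\ge k+(j-k)$. For $z\in Z$, we prescribe all edges from $z$ to unsettled neighbours in $X\setminus S$, and these must total at most $j-k$ as well. Free level $j$ and independence of $S$ give the hypothesis of Lemma~\ref{lem:fix} on $A_j(p)$.

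The core step is the capacitated Hall argument (Lemma~\ref{lem:capacitated-hall}). Let $Y_0=Y\cap N(I)$ be the vertices we hope to hit from $I$, with capacity $b(a)=j-k-d_X(a)$ for $a\in I$. If Hall's condition holds, each $y\in Y_0$ gets an inedge from $I$. The vertices of $Y\setminus N(I)$ are not adjacent to $I$, so we settle them directly as part of $Z$ after splitting by colour. More precisely, we process $P_1,\dots,P_r$ one at a time: we settle the vertices of the current class that are still unhit and not adjacent to $S$, and their outedges hit later classes. If Hall fails, we take a minimal violating set $W\subseteq Y_0$ with $|W|>\sum_{a\in N(W)\cap I}b(a)$. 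By pigeonhole, some colour class $P_i$ contains at least $|W|/r$ vertices of $W$. The plan is to find a vertex $a\in N(W)\cap I$ with more than $b(a)/r\ge\lambda(a)$ neighbours in $W\cap P_i$, to remove $a$ from $I$, and to settle $W\cap P_i\cap N(a)$ (or a suitable independent portion of it) instead. Then $a$, now unsettled, receives at least $\lambda(a)+1$ inedges from settled $Y$-vertices, which is the second alternative in~\ref{item:T-drop}.

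Repair and termination come next. Removing $a$ from $I$ may leave some $X$-neighbours of $a$ without a settled $T$-neighbour, so we re-extend $I$ to a maximal independent set of $G[X\setminus(\text{demoted vertices}\cup N(Z))]$. An $X$-vertex adjacent to $Z$ is either demoted, and then has its $\lambda+1$ drop, or gets an inedge from $Z$ through the prescription; I would check that this inedge only helps. We then recompute the Hall instance. I expect a potential such as $|I|$ together with the set of permanently demoted vertices to be monotone, so that only polynomially many rounds occur. Each round is a max-flow computation, as in the remark after Lemma~\ref{lem:capacitated-hall}, which yields the polynomial-time claim.

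The main obstacle is the demotion step. One must guarantee that some $a$ really has at least $\lambda(a)+1$ neighbours in a single colour class of the violating set. The pigeonhole over $|W|>\sum b(a)$ has to be done carefully, since it is only an averaging argument and $b$ varies with $a$; I would first try weighting by $b(a)$ and choosing $P_i$ to maximise $\sum_{a}|N(a)\cap W\cap P_i|-b(a)/r$. One must also ensure that newly settled $Y$-vertices keep their out-prescriptions within $j-k$ and stay independent from the remaining $I$. Finally, every vertex of $Y$ has potential exactly $j$, so it must be either settled or hit, and this is what forces the whole-class choice.
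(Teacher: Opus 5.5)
There is a genuine gap, and it lies in the demotion and repair step that you flag as the main obstacle. First, you remove a single $a$ from $I$ and settle $S'=W\cap P_i\cap N(a)$. The vertices of $S'$ may also be adjacent to other vertices $a'\in I$, so $I\cup Z$ stops being independent. Restoring independence forces you to drop all of $N(S')\cap I$. Such an $a'$ may have only one neighbour in $S'$, so it need not get either a $T$-inedge or a drop of $\lambda(a')+1$.

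Second, your repair re-extends $I$ only inside $X\setminus(\text{demoted}\cup N(Z))$, and you say an $X$-vertex adjacent to $Z$ ``gets an inedge from $Z$''. An inedge from $Z\subseteq P$ is not an inedge from a vertex of $T$. Take $x\in X\cap N(Z)$ whose only $I$-neighbour was demoted and which has at most $\lambda(x)$ neighbours in $Z$. It satisfies neither alternative of (ii), and (ii) is exactly the dichotomy that Corollary~\ref{cor:variable-block} counts.

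Third, you never exhibit a monotone quantity. $|I|$ drops at every demotion, and once the second problem is repaired, vertices must leave $Z$, so demotion is not permanent. Separately, your worry about keeping out-prescriptions at $Z$ within $j-k$ is unnecessary. A vertex of $Y$ has potential exactly $j$, so you can settle it by directing all its unoriented edges outward before applying Lemma~\ref{lem:fix} to the $X$-part.

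The missing idea is a symmetric pair of exchanges on one independent set $A\subseteq X\cup Y$, starting from $A=\varnothing$, driven by a weighted potential.
\begin{itemize}
\item Move (1) is the reverse move your scheme lacks. If $x\in X\setminus A_X$ has no neighbour in $A_X$ and $|N(x)\cap A_Y|\le\lambda(x)$, delete $N(x)\cap A_Y$ and add $x$.
\item Move (2): suppose $S\subseteq P_i$ consists of vertices of $Y\setminus A_Y$ with no neighbour in $A_Y$, and $|S|>\sum_{a\in N(S)\cap A_X}\lambda(a)$. Then delete all of $N(S)\cap A_X$ and add $S$.
\end{itemize}
Your pigeonhole already produces such an $S$. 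If $i$ maximises $|W\cap P_i|$, then $S=W\cap P_i$ works because $b(a)/r\ge\lambda(a)$, so you need not hunt for one heavy $a$.

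Put $w(A)=\sum_{x\in A_X}\lambda(x)+|A_Y|$. Move (1) does not decrease $w$ and increases $|A_X|$. Move (2) increases $w$ by at least one and lowers $|A_X|$ by at most $n$. Hence $(n+1)w(A)+|A_X|$ strictly increases, and since it is $O(n^3)$, termination is polynomial.

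At termination, every $x\in X\setminus A_X$ has a neighbour in $A_X$ or at least $\lambda(x)+1$ neighbours in $A_Y$. Summing the per-class Hall conditions over the $r$ colour classes then matches every uncovered vertex of $Y$ into $A_X$ with load at most $r\lambda(a)\le j-k-d_X(a)$. That is exactly the budget Lemma~\ref{lem:fix} needs.
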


\noindent\emph{Notation for the proof.} Let $n=|V(G)|$. If $A\subseteq X\cup Y$ is independent, write $A_X=A\cap X$ and $A_Y=A\cap Y$, and set $w(A)=\sum_{x\in A_X}\lambda(x)+|A_Y|$ and $\Phi(A)=(n+1)w(A)+|A_X|$. We also write $Y_A=\{y\in Y\setminus A_Y:N(y)\cap A_Y=\varnothing\}$.

\begin{proof}
We modify the weighted independent-set exchanges and capacitated matching arguments in Chen, Mohar and Wu~\cite[Lemma~3.5]{ChenMoharWu2023} and Wang, Wang and Yu~\cite[Section~3]{WangWangYu2026}; the new feature is the active-degree term $d_X(x)$. Since $d_X(x)\le D\le j-k$, every $\lambda(x)$ is nonnegative. Start with $A=\varnothing$ and perform either of the following exchanges whenever possible.

First, suppose that $x\in X\setminus A_X$ has no neighbor in $A_X$ and $|N(x)\cap A_Y|\le\lambda(x)$. Delete $N(x)\cap A_Y$ from $A$ and add $x$. The new set is independent. Its weight increases; or its weight is unchanged and $|A_X|$ increases. In either case, $\Phi$ increases.

Second, suppose that, for some $i$, a set $S\subseteq Y_A\cap P_i$ satisfies $|S|>\sum_{a\in N(S)\cap A_X}\lambda(a)$. Delete $N(S)\cap A_X$ from $A$ and add $S$. Since $S$ is independent and has no neighbor in $A_Y$, the new set is independent. Its weight increases by at least one, while $|A_X|$ decreases by at most $n$. Hence $\Phi$ increases.

We find the first exchange by scanning $X$. For a fixed $i$, the second exchange exists precisely when the capacitated Hall condition fails in the bipartite graph between $Y_A\cap P_i$ and $A_X$, with capacity $\lambda(a)$ at $a\in A_X$. A maximum-flow computation either certifies the condition or returns a set $S$ that violates it. If $X\cup Y\ne\varnothing$, then $j\le n-1$, since a potential never exceeds the degree of a vertex. Thus $\lambda(a)\le n-1$, $w(A)=O(n^2)$ and $\Phi(A)=O(n^3)$. Each exchange increases the integer $\Phi$ by at least one, so the process terminates after polynomially many exchanges. If $X\cup Y=\varnothing$, there is nothing to do.

At termination,
\begin{align}
&x\in X\setminus A_X,\quad N(x)\cap A_X=\varnothing
   \quad\Longrightarrow\quad |N(x)\cap A_Y|\ge\lambda(x)+1,
   \label{eq:variable-X}\\
&|S|\le\sum_{a\in N(S)\cap A_X}\lambda(a)
   \quad(S\subseteq Y_A\cap P_i,\ 1\le i\le r).
   \label{eq:variable-color}
\end{align}

First settle $A_Y$ at level $j$ by directing every unoriented edge incident with $A_Y$ away from $A_Y$. For $y\in A_Y$, the resulting outdegree is $\dout_p(y)+u_p(y)=\pot_p(y)=j$. The set $A_Y$ is independent and level $j$ is free, so~\ref{inv:levels} is preserved. Every newly oriented edge has a settled endpoint, which preserves~\ref{inv:settled-edge}. An unsettled neighbor of $A_Y$ receives only an inedge, so no new outedge is created at an unsettled vertex and~\ref{inv:base} is unchanged.

For $S\subseteq Y_A$, let $S_i=S\cap P_i$. By \eqref{eq:variable-color},
\[
   |S|=\sum_{i=1}^r|S_i|
   \le\sum_{i=1}^r\sum_{a\in N(S_i)\cap A_X}\lambda(a)
   \le\sum_{a\in N(S)\cap A_X}r\lambda(a).
\]
Lemma~\ref{lem:capacitated-hall} gives a set $\mathcal M$ of edges between $Y_A$ and $A_X$ such that every vertex of $Y_A$ is incident with one edge of $\mathcal M$ and every $a\in A_X$ is incident with at most $r\lambda(a)$ edges of $\mathcal M$.

By~\ref{inv:settled-edge}, every edge of $\mathcal M$ and every edge between $A_X$ and $X\setminus A_X$ is still unoriented, since both endpoints are unsettled. For $a\in A_X$, prescribe every incident edge of $\mathcal M$ to point from $a$ to $Y_A$, and prescribe every edge $ax$ with $x\in X\setminus A_X$ to point from $a$ to $x$. The number of prescribed outedges at $a$ is at most $r\lambda(a)+d_X(a)\le j-k$.

Since $A$ is independent, settling $A_Y$ does not change the potential of a vertex in $A_X$. Thus $\pot_p(a)\ge j$ for every $a\in A_X$. Because level $j$ was free before we settled $A_Y$, we now have $A_j(p)=A_Y$; hence $A_X\cup A_j(p)=A_X\cup A_Y$ is independent. Lemma~\ref{lem:fix}, with $B=j-k$, therefore settles $A_X$ at level $j$.

Every vertex of $Y$ is settled, has a neighbor in $A_Y$, or belongs to $Y_A$ and receives an inedge along $\mathcal M$. Vertices of $P$ whose potential was below $j$ remain below $j$. This proves~\ref{item:P-drop}.

Let $x\in X\setminus A_X$ remain unsettled. If $x$ has a neighbor in $A_X$, it receives an inedge from a vertex of $T$ settled at level $j$. Otherwise, \eqref{eq:variable-X} gives at least $\lambda(x)+1$ neighbors of $x$ in $A_Y$, so its potential decreases by at least $\lambda(x)+1$. This proves~\ref{item:T-drop}. Lemma~\ref{lem:fix} preserves~\ref{inv:settled-edge}--\ref{inv:levels} when $A_X$ is settled.
\end{proof}

\begin{definition}
Let $D\in\mathbb Z_{\ge0}$, and suppose that Lemma~\ref{lem:variable-level} is applied repeatedly with a fixed set $T$ satisfying $\Delta(G[T])\le D$. Each application at one level is a \emph{protected round}. For an unsettled vertex $x\in T$,  a protected round is \emph{first-type for $x$} if it realizes the inedge alternative in Lemma~\ref{lem:variable-level}\ref{item:T-drop}; otherwise it is \emph{second-type for $x$}. If the round is conducted at level $j$, the \emph{gap of $x$} is $\max\{0,\pot_p(x)-j\}$. We use the convention that an empty sum is zero.
\end{definition}

A consecutive block of levels is obtained by applying Lemma~\ref{lem:variable-level} repeatedly.

\begin{corollary}\label{cor:variable-block}
Let $T,P\subseteq V(G)$ be disjoint, let $\Delta(G[T])\le D$, and fix a proper $r$-coloring of $G[P]$, where $r\ge1$. Suppose that every unsettled vertex of $P$ has potential at most $j_0$, and every unsettled vertex of $T$ has potential at most $j_0+\Gamma$, where $\Gamma\in\mathbb Z_{\ge0}$. Let $m\in\mathbb Z_{\ge0}$ and set $\sigma_t=\floor{(j_0-k-D-t)/r}$ for $0\le t<m$. Assume that $\sum_{t=0}^{m-1}\sigma_t\ge\Gamma$, that every level in $[j_0-D-m+1,j_0]$ is free, and, when $D+m>0$, that $j_0-D-m+1\ge k+D$. Then we can use these levels so that every unsettled vertex of $T\cup P$ has potential at most $j_0-D-m$. The construction is polynomial.
\end{corollary}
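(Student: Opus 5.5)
The plan is to apply Lemma~\ref{lem:variable-level} once at every level of the interval, from top to bottom, and then to control the potentials of vertices of $T$ through their gaps. If $D+m=0$ there is nothing to prove. Otherwise, for $t=0,1,\ldots,D+m-1$, perform a protected round at level $j_t=j_0-t$. The hypotheses of Lemma~\ref{lem:variable-level} then have to be checked at each round.
\begin{itemize}
\item The level $j_t$ is free, because the interval $[j_0-D-m+1,j_0]$ consists of free levels and each round settles vertices only at its own level.
\item We have $j_t\ge j_0-D-m+1\ge k+D$.
\item Every unsettled vertex of $P$ has potential at most $j_t$. This holds for $t=0$ by assumption. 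By Lemma~\ref{lem:variable-level}\ref{item:P-drop}, after round $t$ these potentials are at most $j_t-1=j_{t+1}$.
\end{itemize}
So all $D+m$ rounds are legitimate. After the last round every unsettled vertex of $P$ has potential at most $j_0-D-m$. Each round is polynomial, so the whole construction is polynomial.

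Now fix $x\in T$ that is unsettled at the end. First we prove a persistence fact. Suppose $\pot_p(x)\le j_t$ at the start of round $t$. If $\pot_p(x)<j_t$, then $\pot_p(x)\le j_t-1$ and potentials never increase. If $\pot_p(x)=j_t$, then $x\in X$, and either alternative in \ref{item:T-drop} lowers $\pot_p(x)$ by at least one. In both cases, after round $t$ we have $\pot_p(x)\le j_t-1=j_{t+1}$. By induction, once the gap of $x$ becomes $0$ at the start of some round (or after the last round), the final potential of $x$ is at most $j_0-D-m$. It therefore suffices to show that the gap reaches $0$.

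Suppose it never does. Then $x\in X$ in every one of the $D+m$ rounds. In a first-type round, $x$ receives an inedge from a $T$-neighbor that is settled in that round. These neighbors are distinct across rounds, and $x$ has at most $D$ neighbors in $T$, so there are at most $D$ first-type rounds. Hence there are at least $m$ second-type rounds. A first-type round lowers $\pot_p(x)$ by at least one while the level also drops by one, so the gap does not increase.

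The key point, and the main obstacle, is to bound the drop in the $i$-th second-type round ($i=1,\ldots,m$) by $\sigma_{i-1}$, uniformly in when that round occurs. Say this round has index $t_i=(i-1)+f$, where $f$ is the number of first-type rounds before it. The $f$ neighbors that gave those inedges belonged to the active set at their rounds and are now settled. Therefore they are no longer in $X$, and $d_X(x)\le D-f$. Consequently
\[
\lambda(x)\ge\floor{\frac{j_0-t_i-k-(D-f)}{r}}=\floor{\frac{j_0-k-D-(i-1)}{r}}=\sigma_{i-1}.
\]
In this round the potential drops by at least $\lambda(x)+1$ while the level drops by $1$, so the gap decreases by at least $\sigma_{i-1}$.

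The initial gap is at most $\Gamma$, because $\pot_p(x)\le j_0+\Gamma$ at the start. The total decrease over the $m$ second-type rounds is at least $\sum_{t=0}^{m-1}\sigma_t\ge\Gamma$. Hence the gap becomes $0$, contradicting our assumption. By the persistence fact, every unsettled vertex of $T$ ends with potential at most $j_0-D-m$. Together with the bound for $P$, this proves the corollary.
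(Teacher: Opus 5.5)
Your proposal is correct and follows essentially the same argument as the paper: apply Lemma~\ref{lem:variable-level} at the levels $j_0,\dots,j_0-D-m+1$, observe that once the gap is zero it stays zero, bound the first-type rounds by $D$, and use $d_X(x)\le D-f$ to get $\lambda(x)\ge\sigma_{i-1}$ in the $i$th second-type round. The one detail you leave implicit is that when $D+m=0$ the hypothesis on the (empty) sum forces $\Gamma=0$, and that is exactly why this case needs no argument.
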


\begin{proof}
If $D+m=0$, then the assumed sum condition gives $D=m=\Gamma=0$, and there is nothing to prove. Assume $D+m>0$. For $0\le t<m$, the lowest-level condition gives $j_0-k-D-t\ge j_0-k-D-m+1\ge D\ge0$, so every $\sigma_t$ is nonnegative.

Apply Lemma~\ref{lem:variable-level} successively at the levels $j_0,j_0-1,\ldots,j_0-D-m+1$. Property~\ref{item:P-drop} keeps every unsettled vertex of $P$ below the next level.

Fix $x\in T$ that remains unsettled through all rounds. If, before some round at level $j$, we have $\pot_p(x)\le j$, then after that round $\pot_p(x)\le j-1$: this follows from integrality when $\pot_p(x)<j$, and from either alternative in Lemma~\ref{lem:variable-level}\ref{item:T-drop} when $\pot_p(x)=j$. The same argument then applies at every later level. It may therefore be assumed that $\pot_p(x)>j$ immediately before each round and track the positive gap between the potential and the current level.

At most $D$ rounds are first-type for $x$, because each gives $x$ an inedge from a distinct settled neighbor in $T$. Consider the $(t+1)$st second-type round for $x$, where $0\le t<m$, and let $b$ be the number of earlier rounds that are first-type for $x$. Its index among all rounds is $q=b+t$. The $b$ former active neighbors supplied by those first-type rounds have already been settled, so the current active degree satisfies $d_X(x)\le D-b$. At level $j_0-q$,
\[
   \lambda(x)\ge
   \floor{\frac{j_0-(b+t)-k-(D-b)}{r}}
   =\floor{\frac{j_0-k-D-t}{r}}=\sigma_t.
\]
A round that is first-type for $x$ decreases both the potential and the next level by at least one, so it does not increase the gap. A round that is second-type for $x$ decreases the potential by at least $\lambda(x)+1$ while the next level is lower by one; hence it decreases the gap by at least $\sigma_t$.

The initial gap is at most $\Gamma$. If the final gap were positive, then at least $m$ of the $D+m$ rounds would be second-type for $x$. After the first $m$ such rounds, the gap is at most $\Gamma-\sum_{t=0}^{m-1}\sigma_t\le0$, and no later round can increase it, a contradiction. Thus $\pot_p(x)\le j_0-D-m$ after the last round. The polynomial-time assertion follows from Lemma~\ref{lem:variable-level}.
\end{proof}

\section{The upper bound}

\begin{definition}
Let $D\in\mathbb Z_{\ge0}$. A \emph{capping block} is one application of Lemma~\ref{lem:block-capping} to a set $T$ satisfying $\Delta(G[T])\le D$. A \emph{protected block} is one application of Corollary~\ref{cor:variable-block}.
\end{definition}

\begin{proof}[Proof of Theorem~\ref{thm:defective-main}]
We keep the potential-outdegree framework of Chen, Mohar and Wu~\cite[Section~3]{ChenMoharWu2023} and Wang, Wang and Yu~\cite[Section~3]{WangWangYu2026}; Lemmas~\ref{lem:block-capping} and~\ref{lem:variable-level} supply the new treatment of internal edges in the defect parts. The statement is immediate for the empty graph, so put $n=|V(G)|\ge1$. If $k>n-1$, we replace $k$ by $n-1$; the inequality $\mad(G)\le2(n-1)$ remains valid, and the resulting conclusion is stronger. Let $\mathcal P=(V_1,V_2,V_3)$ and $D^*=\defect(\mathcal P)$. Since $D^*\le D$ and $F$ is nondecreasing, it is enough to prove the theorem with $D^*$ in place of $D$. We therefore replace $D$ by $D^*$ and assume $k,D\le n-1$.

Set $h=D+1$, $c=\ceil{2h/5}$, $q=\floor{c/2}$, and $M=k+9h+c+q-3$.
Fix a base orientation $D_0$ and a greedy proper $h$-coloring of each $G[V_i]$. We start with no oriented edge and no settled vertex. We use three capping blocks followed by two protected blocks, always taking consecutive levels from $M$ downwards.

Apply Lemma~\ref{lem:block-capping} first to $V_1$ at the levels $M,M-1,\ldots,M-h+1$, then to $V_2$ at the next $h$ levels, and finally to $V_3$ at the following $h$ levels. Put $j_0=M-3h=k+6h+c+q-3$. The lowest level in these blocks is $j_0+1=k+6h+c+q-2\ge k+D$, and every level is free when first used. Since potential never increases, after the three blocks every unsettled vertex satisfies
\begin{equation}
   \pot_p(v)\le
   \begin{cases}
      j_0+2h,&v\in V_1,\\
      j_0+h,&v\in V_2,\\
      j_0,&v\in V_3.
   \end{cases}
   \label{eq:three-caps-new}
\end{equation}

For the first protected block, apply Corollary~\ref{cor:variable-block} with $T=V_1$, $P=V_3$, $r=h$, $\Gamma=2h$, and $m_1=c$. For $0\le t<c$, the guaranteed decrease is $\sigma_t=\floor{(5h+c+q-2-t)/h}$. If $c\ge2$, then $q\ge1$ and $\sigma_t\ge5$ for every $t<c$, so $\sum_{t=0}^{c-1}\sigma_t\ge5c\ge2h$. If $c=1$, then $h\in\{1,2\}$ and the only term is $\floor{(5h-1)/h}=4\ge2h$.

This block has length $L_1=D+m_1=h-1+c$. Put $j_1=j_0-L_1=k+5h+q-2$. Its levels are $j_0,j_0-1,\ldots,j_1+1$; they are free because the capping blocks stopped at $j_0+1$, and $j_1+1=k+5h+q-1\ge k+D$. Corollary~\ref{cor:variable-block} therefore leaves every unsettled vertex of $V_1\cup V_3$ with potential at most $j_1$. By \eqref{eq:three-caps-new}, every unsettled vertex of $V_2$ has potential at most $j_0+h=j_1+2h-1+c$.

For the second protected block, take $T=V_2$, $P=V_1\cup V_3$, $r=2h$, and $\Gamma_2=2h-1+c$. We choose the fixed $h$-colorings of $V_1$ and $V_3$ with disjoint color sets. Set $a=\ceil{c/2}$ and $m_2=2h-1+a$. For $0\le t<m_2$, we have $\sigma_t=\floor{(j_1-k-D-t)/(2h)}=\floor{(4h+q-1-t)/(2h)}$.
For $0\le t<q$, we have $\sigma_t\ge2$. For $q\le t<m_2$, the numerator is smallest at $t=m_2-1$, where it equals $2h+q+1-a\ge2h$; hence $\sigma_t\ge1$. Therefore $\sum_{t=0}^{m_2-1}\sigma_t\ge2q+(m_2-q)=2h-1+a+q=2h-1+c=\Gamma_2$.
The block has length $L_2=D+m_2=3h-2+a$. Put $j_2=j_1-L_2$. Since $a=\ceil{c/2}$ and $q=\floor{c/2}$, we have $j_2-k=2h+q-a\in\{2h-1,2h\}$. The levels $j_1,j_1-1,\ldots,j_2+1$ are free, and $j_2+1\ge k+2h\ge k+D$. Corollary~\ref{cor:variable-block} leaves every unsettled vertex with potential at most $j_2$.

Every settled vertex has outdegree in $\{j_2+1,\ldots,M\}$. Lemma~\ref{lem:greedy-completion} completes the orientation with maximum outdegree at most $M$. Since $M-k=9D+6+c+\floor{c/2}=9D+6+\floor{3c/2}=F(D)$, this proves the first inequality.

For the simpler estimate, write $D=5z+r_0$, where $0\le r_0\le4$. Direct calculation gives
\[
F(D)=
\begin{cases}
48z+7,&r_0=0,\\
48z+16,&r_0=1,\\
48z+27,&r_0=2,\\
48z+36,&r_0=3,\\
48z+45,&r_0=4.
\end{cases}
\]
Thus $F(D)\le\floor{48D/5}+8$.

For the algorithmic assertion, compute $D^*=\defect(\mathcal P)$ from the supplied partition. For an integer $\kappa$, build a network with a source $s_0$, one node for each edge $e=uv$, one node for each vertex, and a sink $t_0$. Add an arc of capacity $1$ from $s_0$ to every edge-node, arcs of capacity $1$ from the node $e=uv$ to the nodes $u$ and $v$, and an arc of capacity $\kappa$ from each vertex-node to $t_0$. An integral flow of value $|E(G)|$ chooses one endpoint of every edge as its tail and gives an orientation of maximum outdegree at most $\kappa$; conversely, every such orientation gives a flow of value $|E(G)|$. Testing $\kappa=0,1,\ldots,n-1$ yields the least feasible value $\kappa_0$ and a corresponding base orientation. Lemma~\ref{lem:hakimi} gives $\kappa_0=\ceil{\mad(G)/2}\le k$. Running the construction with $\kappa_0$ and $D^*$ gives an orientation of maximum outdegree at most $\kappa_0+F(D^*)\le k+F(D)$.

The proper colorings and the maximal independent sets in Lemma~\ref{lem:block-capping} are found greedily. We obtain sets that violate the capacitated Hall condition, when they exist, and capacitated matchings in Lemma~\ref{lem:variable-level} by maximum flow. In the two protected blocks, the number of color classes is at most $2(D^*+1)\le2n$, so each exchange requires at most $O(n)$ flow tests. There are $O(n)$ levels. At a nontrivial protected level, $j\le n-1$, so the exchange potential $\Phi$ is $O(n^3)$ and permits at most $O(n^3)$ exchanges. Every flow network has polynomial size, and the final completion is greedy. Thus the construction runs in polynomial time.
\end{proof}

\section{Lower bounds and graphs on surfaces}

\subsection{A linear lower bound}

The terminology used in the lower construction is introduced first.

\begin{definition}\label{def:forcer}
Let $F$ be a graph with a distinguished set $A\subseteq V(F)$, let $I$ be a set of positive integers, and let $M\in\mathbb Z_{\ge0}$. We call $F$ an \emph{$I$-forcer for $A$ under the upper bound $M$} if, for every graph $H$ with $V(H)\cap V(F)=A$, every proper orientation $Q$ of $H\cup F$ with $\Delta^+(Q)\le M$ satisfies $\dout_Q(a)\notin I$ for every $a\in A$. We call the vertices of $A$ \emph{anchors} and the vertices of $V(F)\setminus A$ \emph{internal vertices}. An \emph{auxiliary orientation} of $F$ is an orientation of $F$ in which every anchor has outdegree zero.
\end{definition}

The ambient graph in Definition~\ref{def:forcer} may contain arbitrary edges with one or both ends in the anchor set. Whenever we use several forcers, we take their internal vertex sets pairwise disjoint. We use the bipartite gadget of Chen, Mohar and Wu~\cite[Section~4]{ChenMoharWu2023}, in the form recorded by Wang, Wang and Yu~\cite[Lemmas~1 and~4(i)]{WangWangYu2026}, and the interval gadget of Wang, Wang and Yu~\cite[Lemmas~2 and~4(ii)]{WangWangYu2026}. We retain the constructions because their explicit vertex sets are needed for the defective coloring below, but we quote their forcing and auxiliary-orientation properties rather than reproving them.

\begin{construction}\label{constr:basic-forcer}
Let $A$ be a set of $p\ge1$ vertices and let $M\ge p+1$. For each nonempty set $S\subseteq A$, add a set $B_S$ of $pM+1$ vertices, each adjacent precisely to the vertices of $S$. Take $pM+1$ pairwise disjoint $p$-sets $C_1,\ldots,C_{pM+1}$, join every vertex of every $C_t$ to every vertex of $A$, and add a vertex $d_t$ adjacent precisely to the vertices of $C_t$. Set
\[
   B=\bigcup_{\varnothing\ne S\subseteq A}B_S,\qquad
   C=\bigcup_{t=1}^{pM+1}C_t,\qquad
   \mathcal D=\{d_1,\ldots,d_{pM+1}\},
\]
and denote the resulting graph by $F_{\mathrm b}(A,p,M)$.
\end{construction}

\begin{lemma}[{Chen--Mohar--Wu~\cite[Section~4]{ChenMoharWu2023}; Wang--Wang--Yu~\cite[Lemmas~1 and~4(i)]{WangWangYu2026}}]\label{lem:basic-forcer}
Let $F_{\mathrm b}=F_{\mathrm b}(A,p,M)$ be the graph in Construction~\ref{constr:basic-forcer}. Then $F_{\mathrm b}$ is a bipartite $[1,p+1]$-forcer for $A$ under the upper bound $M$. It has an auxiliary orientation in which every internal vertex has outdegree at most $p$.
\end{lemma}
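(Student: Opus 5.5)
The plan is to verify the three assertions separately: bipartiteness, the auxiliary orientation, and the forcing property. Each reduces to a pigeonhole argument on the $pM+1$ copies built into Construction~\ref{constr:basic-forcer}.

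\emph{Bipartiteness and the auxiliary orientation.} By construction every edge of $F_{\mathrm b}$ joins $A\cup\mathcal D$ to $B\cup C$. Indeed, $B$ meets only $A$, each vertex of $C_t$ meets only $A$ and $d_t$, and $A$ and $\mathcal D$ contain no internal edges. So $(A\cup\mathcal D,\,B\cup C)$ is a bipartition. For the auxiliary orientation, I direct every edge at $A$ into $A$, so every anchor has outdegree $0$. I direct every edge $d_tc$ with $c\in C_t$ from $d_t$ to $c$. Then a vertex of $B_S$ has outdegree $|S|\le p$. A vertex of $C_t$ has outdegree $|A|=p$. The vertex $d_t$ has outdegree $|C_t|=p$.

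\emph{Forcing.} Let $H$ be as in Definition~\ref{def:forcer}, and let $Q$ be a proper orientation of $H\cup F_{\mathrm b}$ with $\Delta^+(Q)\le M$. Since every internal vertex has all its neighbours in $F_{\mathrm b}$, outdegrees of internal vertices are determined by arcs inside $F_{\mathrm b}$. The basic pigeonhole step is as follows. If $S\subseteq A$ is nonempty, the anchors of $S$ send at most $|S|M\le pM$ arcs in total. Hence some vertex $b\in B_S$ receives no arc from $S$. Such a $b$ has all its $|S|$ edges directed out, so $\dout(b)=|S|$. Because $b$ is adjacent to every vertex of $S$, no anchor in $S$ has outdegree $|S|$.

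To exclude a value $v\in[1,p]$ at an anchor $a$, I choose $S$ to consist of $a$ together with any $v-1$ further anchors, which is possible since $v\le p=|A|$. Then $|S|=v=\dout(a)$, contradicting the pigeonhole step.

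To exclude $p+1$, I apply the same pigeonhole argument to the sets $C_t$. All of $A$ sends at most $pM$ arcs, so some $C_t$ receives no arc from $A$. Every $c\in C_t$ then has its $p$ edges to $A$ directed out, so $\dout(c)\in\{p,p+1\}$ according to the direction of $cd_t$. The outdegree of $d_t$ equals the number $x$ of vertices $c\in C_t$ with $\dout(c)=p$. If $x=p$, then $d_t$ and its neighbours all have outdegree $p$, which is improper. So $x<p$, and some $c\in C_t$ has outdegree $p+1$. This $c$ is adjacent to every anchor, so no anchor has outdegree $p+1$.

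The main (mild) obstacle is the $p+1$ case. There one must see that the pendant vertex $d_t$ forces at least one vertex of $C_t$ to reach outdegree exactly $p+1$, rather than only bounding outdegrees in $C_t$ by $p+1$. The hypothesis $M\ge p+1$ is used only so that the excluded interval $[1,p+1]$ lies within the permitted range of outdegrees. The counting bound $pM<pM+1$ is what makes both pigeonhole steps work.
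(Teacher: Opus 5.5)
Your proof is correct. The paper never proves this lemma. It keeps Construction~\ref{constr:basic-forcer} only so that it can read off the defective coloring later, and it explicitly quotes the forcing and auxiliary-orientation properties from Chen--Mohar--Wu and Wang--Wang--Yu.

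Your argument is a direct verification of exactly what is cited, and it makes the lemma self-contained. It has four parts:
\begin{itemize}
\item the bipartition $(A\cup\mathcal D,\,B\cup C)$;
\item the auxiliary orientation that directs every edge into $A$ and out of each $d_t$;
\item the pigeonhole over the $pM+1$ vertices of $B_S$ with $|S|=\dout(a)$, which excludes $[1,p]$;
\item the pigeonhole over the sets $C_t$, combined with the pendant vertex $d_t$, which forces some $c\in C_t$ to have outdegree exactly $p+1$ and so excludes $p+1$.
\end{itemize}

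One small remark on your last paragraph. The hypothesis $M\ge p+1$ is not logically needed for the forcing conclusion. If $M\le p$, your $C_t$ argument shows that no proper orientation with $\Delta^+(Q)\le M$ exists, so the statement holds vacuously. Your comment therefore describes why the hypothesis is natural rather than a step where it is actually used.
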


By Construction~\ref{constr:basic-forcer}, the subgraph induced by $A\cup C$ contains $K_{p,p(pM+1)}$ with one part equal to $A$.

\begin{definition}\label{def:forcing-block}
Let $R\ge3$ and $1\le\ell\le R-1$. An \emph{$(R,\ell)$-forcing block} is obtained as follows. For each $i\in[1,\ell]$, take a copy $Q_i$ of $K_{R+1}$ on $\{x_i,y_{i,0},\ldots,y_{i,R-1}\}$ and delete the edge $x_i y_{i,0}$. Add all edges among $x_1,\ldots,x_\ell$. The set $X=\{x_1,\ldots,x_\ell\}$ is called the \emph{interface clique} of the block.
\end{definition}

\begin{construction}\label{constr:interval-forcer}
Let $R\ge3$ and $1\le a\le b$, put $\ell=b-a+1\le R-1$, and let $p,M\in\mathbb Z_{\ge0}$. Set $s=p+a-R+1$, and assume that $s\ge2$ and $M\ge p+b$. For an anchor set $A$ of size $s$, take pairwise disjoint sets $Y_0,\ldots,Y_{R-1}$ of size $s-2$ and $N=M(s+R(s-2))+1$ pairwise disjoint $(R,\ell)$-forcing blocks from Definition~\ref{def:forcing-block}. In every block, join each $x_i$ to all vertices of $A$ and each $y_{i,j}$ to all vertices of $Y_j$. These new edges are the interface edges of the block. Denote the resulting graph by $F_{\mathrm i}(A;R,a,b,p,M)$.
\end{construction}

\begin{lemma}[{Wang--Wang--Yu~\cite[Lemmas~2 and~4(ii)]{WangWangYu2026}}]\label{lem:interval-forcer}
Let $F_{\mathrm i}=F_{\mathrm i}(A;R,a,b,p,M)$ be the graph in Construction~\ref{constr:interval-forcer}. Then $F_{\mathrm i}$ is a $[p+a,p+b]$-forcer for $A$ under the upper bound $M$. If
\begin{equation}
   s+\ceil{\frac{b-a}{2}}\le p,
   \qquad s-1+\ceil{\frac{R-1}{2}}\le p,
   \label{eq:forcer-orientation}
\end{equation}
then it has an auxiliary orientation in which every internal vertex has outdegree at most $p$.
\end{lemma}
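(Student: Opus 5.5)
The statement to prove is Lemma~\ref{lem:interval-forcer}. It is attributed to Wang, Wang and Yu, so the plan is to reconstruct their argument in two parts: the forcing property, then the auxiliary orientation under \eqref{eq:forcer-orientation}.

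\emph{Forcing property.} Fix $H$ and a proper orientation $Q$ of $H\cup F_{\mathrm i}$ with $\Delta^+(Q)\le M$. Suppose, for contradiction, that some anchor $a^*$ has $\dout_Q(a^*)\in[p+a,p+b]$.

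The number $N$ is chosen so that the total outdegree of the vertices of $A\cup\bigcup_j Y_j$ is at most $M(s+R(s-2))<N$. Hence some block receives no arc from $A\cup\bigcup Y_j$, so all its interface edges point out of the block.

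In that block, each $x_i$ has $s$ outedges to $A$. Each $y_{i,j}$ has $s-2$ outedges to $Y_j$. Inside each $Q_i=K_{R+1}-x_iy_{i,0}$ the outdegrees must be distinct on adjacent pairs. The $y_{i,j}$ form a clique $K_R$, so their internal outdegrees are a permutation of $0,\dots,R-1$ modulo the arcs to $x_i$. Counting arcs within $Q_i$ then pins down $\dout(x_i)$: it lies in $s+[R-1-\ldots]$, and the missing edge $x_iy_{i,0}$ lets exactly one value be shifted. I would show $\dout(x_i)\in\{s+R-2+t\}$ for some $t$ determined by which $y$ has which rank.

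The interface clique forces the values $\dout(x_1),\dots,\dout(x_\ell)$ to be pairwise distinct. With $\ell=b-a+1$ values, all lying in the window $[s+R-2+1,\,s+R-2+\ell]=[p+a,p+b]$, they must fill that interval. Each $x_i$ is adjacent to $a^*$, so $\dout(a^*)$ cannot equal any of them. This gives the contradiction.

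The delicate step is the case analysis inside $Q_i$. I must check that $\dout(x_i)$ is forced into exactly the interval $[s+R-1,\,s+R-2+\ell]$ and no wider. I expect this to use $\ell\le R-1$, together with the observation that the $y_{i,j}$ in a $K_R$ with all external arcs outgoing take outdegrees $s-2+\{0,\dots,R-1\}$ up to the $x_i$ contribution. The missing edge is what creates the flexibility that $x_i$ must avoid. This is the main obstacle, and I would first work the small case $R=3$ explicitly to confirm the range.

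\emph{Auxiliary orientation.} Orient every interface edge from the block to $A$ or to $Y_j$, so anchors get outdegree $0$. The vertices of $Y_j$ are internal, so they also take only inedges along interface edges and have outdegree $0$.

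Within each $Q_i$, orient the $K_R$ on the $y$'s nearly regularly. Each $y_{i,j}$ then has internal outdegree at most $\lceil (R-1)/2\rceil$, plus possibly one arc to $x_i$. Together with its $s-2$ interface arcs, this gives at most $s-1+\lceil(R-1)/2\rceil\le p$.

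For $x_i$, orient the edges $x_iy_{i,j}$ toward $x_i$. Orient the interface clique nearly regularly, giving $x_i$ outdegree at most $s+\lceil(\ell-1)/2\rceil=s+\lceil(b-a)/2\rceil\le p$.

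Both inequalities in \eqref{eq:forcer-orientation} are exactly what this count needs. This completes the plan.
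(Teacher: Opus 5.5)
The paper does not reprove this lemma; it quotes it from Wang, Wang and Yu. Your outline has the right skeleton:
\begin{itemize}
\item The pigeonhole count over the $N$ blocks correctly yields a block whose interface arcs all point outward.
\item The clique on $x_1,\dots,x_\ell$, together with the adjacency of every anchor to every $x_i$, finishes the forcing argument once each $\dout(x_i)$ is known to lie in $[p+a,p+b]$.
\item Your auxiliary orientation is correct and uses the two inequalities in \eqref{eq:forcer-orientation} exactly: interface edges out of the blocks, $y_{i,j}\to x_i$ for $j\ge1$, and near-regular tournaments on the $y$-clique and the interface clique.
\end{itemize}

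The gap is the step you flag as the main obstacle, and it is the whole content of the forcing property. The upper bound $\dout(x_i)\le s+(R-1)+(\ell-1)=p+b$ is trivial; the lower bound is not. Since $x_i$ is not adjacent to $y_{i,0}$, a priori $\dout(x_i)$ could equal $\dout(y_{i,0})\le s+R-3<p+a$. Your phrases (``a permutation of $0,\dots,R-1$ modulo the arcs to $x_i$'', ``exactly one value is shifted'') contain no argument excluding this, and checking $R=3$ does not prove it.

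One way to close the gap:
\begin{itemize}
\item Let $e$ be the number of arcs from $y_{i,1},\dots,y_{i,R-1}$ into $x_i$. Then $\dout(x_i)\ge s+R-1-e$.
\item The $R$ outdegrees of the $y_{i,j}$ are distinct, lie in $[s-2,s-2+R]$, and sum to $R(s-2)+\binom R2+e$. So the only omitted value is $s-2+R-e$, and every value in $[s+R-1-e,\,s+R-2]$ is used.
\item Let $S$ be the set of the $e$ vertices using these values. Then $\sum_{y\in S}\dout(y)=e(s-2)+eR-\binom e2$.
\item This sum is also at most $e(s-2)+\binom e2+e(R-e)+\beta$, where $\beta$ counts the vertices of $S$ sending an arc to $x_i$. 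Comparing the two gives $\beta\ge e$.
\item Hence every vertex of $S$ is adjacent to $x_i$, so $y_{i,0}\notin S$. All values in $[s+R-1-e,\,s+R-2]$ are therefore taken by neighbours of $x_i$, which forces $\dout(x_i)\ge s+R-1=p+a$.
\end{itemize}
Without an argument of this kind, the proposal does not establish that $F_{\mathrm i}$ is a $[p+a,p+b]$-forcer.
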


\begin{remark}[Robustness under additional anchor edges]
The forcing proofs in the cited lemmas use only the global outdegree bound, the edges between anchors and internal vertices, and properness at internal neighbors. Extra edges with both ends in the anchor set therefore do not affect the forcing conclusions. The auxiliary orientations concern only the edges of the forcer. Thus the cited statements apply in the ambient setting of Definition~\ref{def:forcer}.
\end{remark}

We use the following density argument to combine several forcers.

\begin{lemma}[{Wang--Wang--Yu~\cite[Lemma~3]{WangWangYu2026}}]\label{lem:anchored-density}
Let $H$ be a graph, let $U\subseteq V(H)$, and let $p\in\mathbb Z_{\ge0}$. Suppose that $\mad(H[U])\le2p$ and that $H-E(H[U])$ has an orientation in which every vertex of $U$ has outdegree zero and every vertex outside $U$ has outdegree at most $p$. Then $\mad(H)\le2p$.
\end{lemma}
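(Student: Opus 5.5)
We prove Lemma~\ref{lem:anchored-density} by a single orientation and Lemma~\ref{lem:hakimi}. By Lemma~\ref{lem:hakimi}, $\mad(H)\le2p$ holds exactly when $H$ has an orientation of maximum outdegree at most $p$. So it suffices to build one orientation $Q$ of all of $H$ with $\Delta^+(Q)\le p$.

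The edge set of $H$ is the disjoint union of $E(H[U])$ and $E(H-E(H[U]))$, so we orient the two parts separately. Since $\mad(H[U])\le2p$, Lemma~\ref{lem:hakimi} gives an orientation $Q_1$ of $H[U]$ in which every vertex of $U$ has outdegree at most $p$. By hypothesis there is an orientation $Q_2$ of $H-E(H[U])$ in which every vertex of $U$ has outdegree zero and every vertex outside $U$ has outdegree at most $p$. Let $Q=Q_1\cup Q_2$, which orients every edge of $H$ exactly once.

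For $u\in U$ we have $\dout_Q(u)=\dout_{Q_1}(u)+0\le p$. For $v\notin U$, no edge of $H[U]$ is incident with $v$, so $\dout_Q(v)=\dout_{Q_2}(v)\le p$. Hence $\Delta^+(Q)\le p$, and the reverse direction of Lemma~\ref{lem:hakimi} yields $\mad(H)\le2p$. If one prefers to avoid that direction, the same conclusion follows directly: for any nonempty subgraph $H'$, each edge of $H'$ is counted at its tail in $Q$, so $|E(H')|\le p|V(H')|$.

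There is no real obstacle here. The only point needing care is that the two edge sets partition $E(H)$ and that vertices outside $U$ meet no edge of $H[U]$, so the outdegree contributions never add beyond $p$.
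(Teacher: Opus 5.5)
Your proof is correct and complete. The edge sets $E(H[U])$ and $E(H)\setminus E(H[U])$ partition $E(H)$, vertices outside $U$ meet no edge of $H[U]$, and vertices of $U$ gain no outdegree from the second orientation. So the union has maximum outdegree at most $p$.

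The paper does not prove this lemma. It quotes it from Wang, Wang and Yu and uses only the statement, so there is no in-paper argument to compare with. Gluing the two orientations, as you do, is the natural proof.

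One small remark concerns your fallback. It avoids only the easy direction of Lemma~\ref{lem:hakimi}; you still need the nontrivial direction to obtain $Q_1$. You can bypass Hakimi's theorem entirely by counting edges in a nonempty $H'\subseteq H$ directly:
\begin{enumerate}[label=(\roman*)]
\item The edges of $H'$ with both ends in $U$, together with the vertex set $V(H')\cap U$, form a subgraph of $H[U]$. Since $\mad(H[U])\le 2p$, there are at most $p\,|V(H')\cap U|$ such edges; if $V(H')\cap U=\varnothing$, there are none.
\item Every other edge of $H'$ has its tail in $V(H')\setminus U$ under the given orientation of $H-E(H[U])$. Hence there are at most $p\,|V(H')\setminus U|$ of those.
\end{enumerate}
Adding the two bounds gives $|E(H')|\le p\,|V(H')|$, with no flow or Hall-type argument.
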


We use a common defective coloring for the two interval forcers in the construction below.

\begin{lemma}[{Defect coloring of the forcers in~\cite[Lemmas~1 and~2]{WangWangYu2026}}]\label{lem:forcer-defect-coloring}
Let $q\ge2$ and put $R=2q+1$. Suppose that all anchors receive color $0$. The basic forcer and each of the interval forcers with $(a,b)=(2,2q+1)$ or $(a,b)=(q,3q-1)$ admit a $3$-coloring with defect at most $q-1$. In the interval forcers, no internal neighbor of an anchor receives color $0$.
\end{lemma}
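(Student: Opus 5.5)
The proof will be by explicit colorings; only the defect condition inside each forcer needs checking. I will use the fact that in a clique every color class of size $s$ gives internal degree $s-1$. So it suffices to keep every monochromatic clique of size at most $q$ and to separate high-degree attachments by color. Throughout, anchors get color $0$, as required.

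\emph{Basic forcer.} $F_{\mathrm b}$ is bipartite with sides $A\cup\mathcal D$ and $B\cup C$: every vertex of $B\cup C$ is adjacent only to vertices of $A\cup\mathcal D$. I will color $A\cup\mathcal D$ with $0$ and $B\cup C$ with $1$. Then every color class is independent, so the defect is $0\le q-1$.

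\emph{Interval forcers.} For both $(a,b)=(2,2q+1)$ and $(a,b)=(q,3q-1)$ we have $\ell=b-a+1=2q=R-1$. I therefore treat them by one coloring of a block of $F_{\mathrm i}(A;2q+1,a,b,p,M)$. The internal neighbors of anchors are exactly the vertices $x_i$, so these must avoid color $0$.
\begin{enumerate}[label=(\arabic*)]
\item Interface clique: split $X$ into two halves of size $q$, coloring $x_1,\ldots,x_q$ with $1$ and $x_{q+1},\ldots,x_{2q}$ with $2$. Each $x_i$ then has exactly $q-1$ same-colored neighbors inside $X$. So no neighbor of $x_i$ inside $Q_i$ may share its color $c_i$.
\item In $Q_i$, color $y_{i,0}$ with $c_i$; this is allowed because $x_iy_{i,0}$ is the deleted edge. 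Color $y_{i,1},\ldots,y_{i,q}$ with $0$. Color $y_{i,q+1},\ldots,y_{i,2q}$ with the other color $c_i'\in\{1,2\}\setminus\{c_i\}$.
\item Color $Y_0$ and $Y_{q+1},\ldots,Y_{2q}$ with $0$, and $Y_1,\ldots,Y_q$ with $1$.
\end{enumerate}

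The choice of colors for $Y_j$ is the step I expect to be the main obstacle. Each vertex of $Y_j$ is adjacent to $y_{i,j}$ in all $N$ blocks, so it has unbounded degree. The plan handles this by making the color pattern of the $y_{i,j}$ depend only on $j$ up to swapping $1\leftrightarrow2$:
\begin{enumerate}[label=(\arabic*)]
\item For each $j\in[1,q]$, the $y_{i,j}$ are all $0$, and $Y_j$ is colored $1$.
\item For $j=0$ and for each $j\in[q+1,2q]$, the $y_{i,j}$ lie in $\{1,2\}$, and $Y_j$ is colored $0$.
\end{enumerate}
Hence every edge between $Y_j$ and a block is bichromatic. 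The sets $Y_j$ are independent and not adjacent to $A$, so they have defect $0$.

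It remains to check the remaining vertices.
\begin{enumerate}[label=(\arabic*)]
\item A color-$0$ vertex $y_{i,j}$ has exactly $q-1$ same-colored neighbors, namely the rest of $y_{i,1},\ldots,y_{i,q}$.
\item A color-$c_i'$ vertex $y_{i,j}$ has $q-1$ same-colored neighbors. Its neighbors $x_i$ and $y_{i,0}$ have color $c_i\ne c_i'$, and its neighbors in $Y_j$ have color $0$.
\item The vertex $y_{i,0}$ has no same-colored neighbor: it is not adjacent to $x_i$, and $Y_0$ has color $0$.
\item The vertex $x_i$ keeps its $q-1$ same-colored neighbors from $X$. Its neighbors in $Q_i$ have colors $0$ or $c_i'$, and its anchor neighbors have color $0$.
\end{enumerate}

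Anchors have color $0$. Inside the forcer, their neighbors are $x_i$ (colors $1$ and $2$) or, in the basic forcer, vertices of $B\cup C$ (color $1$). Edges among anchors are not edges of the forcer, so anchors contribute no defect inside the forcer. Finally, no internal neighbor of an anchor in an interval forcer has color $0$. This gives the claimed $3$-colorings with defect at most $q-1$.
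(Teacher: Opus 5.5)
Your proposal is correct and follows essentially the paper's proof. You use the same bipartite $0/1$ coloring of the basic forcer, the same $q$--$q$ split of the interface clique into colors $1$ and $2$, give $y_{i,0}$ the color of $x_i$ via the deleted edge, split the rest of each $y$-clique into two classes of size $q$, and color each $Y_j$ so that all its edges are bichromatic. The only difference is cosmetic: you fix the color-$0$ positions at $y_{i,1},\dots,y_{i,q}$ independently of $i$ and use only colors $0,1$ on the $Y_j$, whereas the paper ties the colors of the $y_{i,j}$ to a $1/2$-coloring of $Y_1,\dots,Y_{2q}$; your vertex-by-vertex defect check is complete.
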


\begin{proof}
We color the two cited gadgets~\cite[Lemmas~1 and~2]{WangWangYu2026} in the parameter ranges used here. For the basic forcer, we give color $0$ to the part containing the anchors and color $1$ to the other part.

Consider either interval forcer. In both cases $\ell=b-a+1=2q$. In every forcing block, we color the clique $\{x_1,\ldots,x_{2q}\}$ with colors $1$ and $2$, using each color $q$ times. We give $Y_0$ color $0$, split $Y_1,\ldots,Y_{2q}$ into two groups of $q$ sets, and give the two groups colors $1$ and $2$, respectively.

Fix $i$. If $x_i$ has color $1$, we give $y_{i,0}$ color $1$, give $y_{i,j}$ color $2$ when $Y_j$ has color $1$, and give $y_{i,j}$ color $0$ when $Y_j$ has color $2$. If $x_i$ has color $2$, we interchange colors $1$ and $2$ in this rule.

The clique on $y_{i,0},\ldots,y_{i,2q}$ has color-class sizes $1,q,q$. The only $y$-vertex with the color of $x_i$ is $y_{i,0}$, and $x_i y_{i,0}$ is not an edge. Each color class in the $x$-clique has size $q$, and every $y_{i,j}$ has a color different from the vertices of $Y_j$. Hence every internal vertex has at most $q-1$ neighbors of its own color. Moreover, the internal neighbors of an anchor are $x$-vertices and therefore have colors $1$ and $2$.
\end{proof}

\begin{construction}\label{constr:lower}
Fix $D\ge1$, and set
\begin{equation}
   q=D+1,
   \qquad m=18q-12,
   \qquad p=(m+2)q-2,
   \qquad R=2q+1,
   \qquad M=p+6q-2.
   \label{eq:lower-parameters}
\end{equation}
Let $A$ be a set of $p$ vertices. Since $q\ge2$, we may choose $U\subseteq A^*\subseteq A$ with $|U|=mq$ and $|A^*|=(m+1)q-2$. An \emph{anchor block} consists of $A$ together with the following forcers, whose internal vertex sets are pairwise disjoint:
\begin{enumerate}[label=\textup{(F\arabic*)}]
\item a basic $[1,p+1]$-forcer for $A$ under the upper bound $M$;
\item an interval forcer for $U$ under the upper bound $M$, with $R=2q+1$, $a=2$, and $b=2q+1$;
\item an interval forcer for $A^*$ under the upper bound $M$, with $R=2q+1$, $a=q$, and $b=3q-1$.
\end{enumerate}
The required anchor sizes in (F2) and (F3) are, respectively, $p+2-R+1=mq=|U|$ and $p+q-R+1=(m+1)q-2=|A^*|$. Both sizes are at least $2$, both intervals have length $2q=R-1$, and $M\ge p+b$ in each case.

Take three disjoint anchor blocks, with anchor sets $A_1,A_2,A_3$ and corresponding subsets $U_i\subseteq A_i^*\subseteq A_i$. For each $i$, partition $U_i=U_i^1\cup\cdots\cup U_i^m$, where $|U_i^s|=q$ for $1\le s\le m$. Add all edges inside each $U_i^s$, and add every edge between $U_i$ and $U_j$ when $i\ne j$. We denote the resulting graph by $G_D$, put $U^*=U_1\cup U_2\cup U_3$ and $\mathcal A=A_1\cup A_2\cup A_3$, and call $U^*$ the core. Thus $G_D[U_i]$ is the disjoint union of $m$ copies of $K_q$, while $U_1,U_2,U_3$ are pairwise complete.
\end{construction}

\begin{proof}[Proof of Theorem~\ref{thm:lower}]
We use the known forcing and density properties quoted in Lemmas~\ref{lem:basic-forcer}, \ref{lem:interval-forcer}, and~\ref{lem:anchored-density}; these are due to Chen, Mohar and Wu~\cite[Section~4]{ChenMoharWu2023} and Wang, Wang and Yu~\cite[Lemmas~1--4]{WangWangYu2026}. The defective coloring and the core count below are specific to our construction. Fix $D\ge1$ and let $G=G_D$ be the graph in Construction~\ref{constr:lower}. The defective coloring is verified as follows. Give all vertices of $A_i$ color $i-1$, for $i=1,2,3$. The basic forcer is colored with two colors while keeping its anchors in the prescribed color. By Lemma~\ref{lem:forcer-defect-coloring}, we have a coloring of the two interval forcers; after cyclically permuting the colors, their anchors receive the prescribed color. Different attached forcers have disjoint internal vertex sets and no edges between their internal vertices. The new edges between distinct $U_i$ and $U_j$ join different colors. Within $U_i$, the only new same-colored edges form the cliques $U_i^1,\ldots,U_i^m$, each of order $q$. Hence every anchor in $U_i$ has exactly $q-1$ same-colored neighbors in the core and no same-colored neighbor inside an attached forcer. By Lemma~\ref{lem:forcer-defect-coloring}, we obtain the same bound for every other vertex. Thus the coloring has defect $q-1=D$.

It remains to determine the density parameter. Vertices of $\mathcal A\setminus U^*$ are isolated in $G[\mathcal A]$. Every vertex of $U^*$ has $2mq$ neighbors in the other two sets $U_j$ and $q-1$ neighbors in its own $q$-clique. Therefore $\mad(G[\mathcal A])\le\Delta(G[\mathcal A])=(2m+1)q-1\le2p$, since $2p-((2m+1)q-1)=3q-3\ge0$.
By Lemma~\ref{lem:basic-forcer}, every basic forcer has an auxiliary orientation with maximum outdegree at most $p$. For (F2), whose anchor size is $s=mq$, the two quantities in \eqref{eq:forcer-orientation} are $(m+1)q\le p$ and $(m+1)q-1\le p$. For (F3), whose anchor size is $s=(m+1)q-2$, they are $p$ and $p-1$. Lemma~\ref{lem:interval-forcer} therefore gives the required auxiliary orientations of both interval forcers.
Thus every attached forcer has an auxiliary orientation in which its anchors have outdegree zero and every internal vertex has outdegree at most $p$. These orientations agree on $\mathcal A$. Lemma~\ref{lem:anchored-density} gives $\mad(G)\le2p$.

By Construction~\ref{constr:basic-forcer}, each basic forcer contains $K_{p,p(pM+1)}$. Its average degree is $2p\cdot p(pM+1)/(p+p(pM+1))=2p-2p/(pM+2)>2p-2$. Consequently, $\ceil{\mad(G)/2}=p$.

Suppose, for a contradiction, that $G$ has a proper orientation with maximum outdegree at most $M$. Fix $i\in\{1,2,3\}$. By Lemmas~\ref{lem:basic-forcer} and~\ref{lem:interval-forcer}, the three forcers attached to the $i$th anchor block exclude $[1,p+1]$, $[p+2,p+2q+1]$, and $[p+q,p+3q-1]$, respectively. Their union is $[1,p+3q-1]$. Hence every vertex of $U^*$ has outdegree in $\mathcal S=\{0\}\cup[p+3q,p+6q-2]$, and $\mathcal S$ contains exactly $3q$ integers.

The sets $U_1,U_2,U_3$ are pairwise complete, so an outdegree value used on one $U_i$ cannot occur on another $U_j$. Each $U_i$ contains a clique of order $q$ and therefore uses at least $q$ distinct values. Since only $3q$ values are available, each $U_i$ uses exactly $q$ values, the three value sets are disjoint, and their union is $\mathcal S$.

Every component $U_i^s$ is a $q$-clique, so it uses all $q$ values assigned to $U_i$, once each. Since $G[U_i]$ has $m$ such components, every value in $\mathcal S$ occurs exactly $m$ times on $U^*$. Therefore $\sum_{u\in U^*}\dout(u)=m\sum_{x\in\mathcal S}x$. Using $p=(m+2)q-2$, we obtain
\begin{align}
   \sum_{x\in\mathcal S}x
   &=(3q-1)p+\sum_{r=3q}^{6q-2}r \notag\\
   &=\frac{(3q-1)(2mq+13q-6)}2.
   \label{eq:allowed-sum}
\end{align}
On the other hand,
\begin{equation}
   \begin{aligned}
      |E(G[U^*])|
      &=3(mq)^2+3m\binom q2\\
      &=3m^2q^2+\frac{3mq(q-1)}2.
   \end{aligned}
   \label{eq:core-edges}
\end{equation}
Every edge of $G[U^*]$ contributes one to the sum of the outdegrees of vertices in $U^*$, while edges directed from $U^*$ to the attached forcers can only increase that sum. Thus $\sum_{u\in U^*}\dout(u)\ge |E(G[U^*])|$.
However, \eqref{eq:lower-parameters}, \eqref{eq:allowed-sum}, and \eqref{eq:core-edges} give
\begin{align*}
   |E(G[U^*])|-m\sum_{x\in\mathcal S}x
   &=m\bigl(mq-18q^2+14q-3\bigr)\\
   &=m(2q-3)>0.
\end{align*}
This contradicts the outdegree identity above and the preceding inequality. Hence $G$ has no proper orientation with maximum outdegree at most $M$, and $\pchi(G)\ge M+1=p+6q-1$. Since $\ceil{\mad(G)/2}=p$, this yields $\pchi(G)-\ceil{\mad(G)/2}\ge6q-1=6D+5$.
\end{proof}

\subsection{Graphs on surfaces}

Woodall proved defective-coloring and defective-choosability results for graphs on surfaces. His theorem is stated in terms of the Euler characteristic; after rewriting the parameter in terms of Euler genus, the ordinary $3$-coloring part takes the following form.

\begin{theorem}[{Woodall~\cite[Theorem~2(a)(i)]{Woodall2011}}]\label{thm:woodall}
Every graph of Euler genus $\gamma$ has a $3$-coloring with defect at most $D_\gamma=\ceil{\max\{9,2+\sqrt{4\gamma+6}\}}$.
\end{theorem}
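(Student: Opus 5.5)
The plan is to obtain Theorem~\ref{thm:woodall} directly from Woodall's theorem by changing the parameter, with no new combinatorics. Woodall's Theorem~2(a)(i) is stated for graphs embeddable in a surface $\Sigma$ of Euler characteristic $\varepsilon$. I read it as asserting that every such graph has a $(d,d,d)$-coloring, meaning a $3$-coloring with defect at most $d$, for every integer $d$ at least an explicit threshold $\tau(\varepsilon)$. That threshold is the maximum of the constant $9$ and a term of the form $2+\sqrt{c_1-c_2\varepsilon}$.

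The first step is to fix the embedding surface. If $G$ has Euler genus $\gamma$, then by definition $G$ embeds in some surface of Euler genus $\gamma$. This surface may be orientable, of genus $\gamma/2$, or nonorientable, of genus $\gamma$. In both cases its Euler characteristic is $\varepsilon=2-\gamma$. So Woodall's hypothesis applies to $G$ with $\varepsilon=2-\gamma$, and I need not separate the orientable and nonorientable cases.

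The second step is the substitution. I would insert $\varepsilon=2-\gamma$ into the radicand of Woodall's threshold and check that it becomes $4\gamma+6$. In Euler-characteristic form this radicand is $14-4\varepsilon$, so I would verify that Woodall's expression has exactly this form. Since the defect is an integer and the coloring conclusion is monotone in $d$, I may then take $d=D_\gamma=\ceil{\max\{9,2+\sqrt{4\gamma+6}\}}$. A coloring with defect at most a smaller admissible value also has defect at most $D_\gamma$.

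I also need to confirm that Woodall's statement covers every surface, including the sphere and projective plane ($\gamma\le1$), and not only those with $\varepsilon\le0$. If it does not, the small cases follow separately: for $\gamma=0$, Cowen, Cowen and Woodall~\cite{CowenCowenWoodall1986} give a $(2,2,2)$-coloring, and $2\le9\le D_0$. For $\gamma=1$ (the projective plane), I would need a similarly small known defect bound, at most $9\le D_1$, or else rely on Woodall's statement including $\varepsilon=1$.

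I expect the only real obstacle to be this bookkeeping: matching Woodall's exact constants and his sign convention for $\varepsilon$, and confirming the range of surfaces his theorem covers. The mathematical content is entirely in the cited theorem.
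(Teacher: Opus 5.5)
Your proposal matches the paper. The paper also gives no argument beyond citing Woodall's Theorem~2(a)(i) and substituting $\varepsilon=2-\gamma$ for the Euler characteristic; the loose end you flag for $\gamma\le1$ closes easily even if Woodall assumes $\varepsilon\le0$, because planar and projective-planar graphs also embed in the Klein bottle ($\gamma=2$, $\varepsilon=0$), where the bound is $D_2=9=D_0=D_1$.
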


We also need the following density estimate, which follows from Euler's formula.

\begin{proposition}[{Surface density estimate; Euler's formula, cf.~\cite{MoharThomassen2001}}]\label{prop:mad-surface}
If $G$ is embeddable on a surface of Euler genus $\gamma\ge1$, then $\mad(G)\le6+\sqrt{6\gamma}$.
\end{proposition}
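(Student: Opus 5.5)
The plan is to bound the density of an arbitrary subgraph using Euler's formula. Let $H$ be a nonempty subgraph of $G$ with $n'=|V(H)|$ and $e'=|E(H)|$. Since $H$ also embeds on the surface of Euler genus $\gamma$, it suffices to show that $2e'/n'\le6+\sqrt{6\gamma}$. We may assume $H$ is simple; we may also assume $n'\ge3$ and that $H$ has at least one edge, since otherwise $2e'/n'\le 1$. For a simple graph with $n'\ge3$ embedded on a surface of Euler genus $\gamma$, Euler's formula $n'-e'+f\ge2-\gamma$ together with $2e'\ge3f$ gives the standard bound
\[
  e'\le 3n'-6+3\gamma\le 3n'+3\gamma-3 .
\]
Disconnected $H$ or a non-cellular embedding only improves the inequality, so this step is routine (cf.~\cite{MoharThomassen2001}).

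Next I combine this with the trivial bound $e'\le\binom{n'}2$. This gives
\[
  \frac{2e'}{n'}\le\min\Bigl\{\,n'-1,\;6+\frac{6\gamma-6}{n'}\Bigr\}.
\]
The first term increases with $n'$ and the second decreases, so I compare each with $6+\sqrt{6\gamma}$ according to the size of $n'$. If $n'-1\le 6+\sqrt{6\gamma}$, the first bound already suffices. Otherwise $n'>7+\sqrt{6\gamma}>\sqrt{6\gamma}$, so $(6\gamma-6)/n'<6\gamma/\sqrt{6\gamma}=\sqrt{6\gamma}$, and the second bound suffices. Since $\gamma\ge1$, the quantity $6\gamma-6$ is nonnegative, so dividing by the larger $n'$ does indeed decrease the term. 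Taking the maximum over $H$ then gives $\mad(G)\le6+\sqrt{6\gamma}$.

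The main obstacle is just making the genus form of Euler's inequality precise: it must hold for every subgraph, including disconnected subgraphs and embeddings that are not cellular. I would handle this by citing the standard fact that a simple graph with $n\ge3$ vertices embedded on a surface of Euler genus $\gamma$ has at most $3n-6+3\gamma$ edges. The remaining case split is elementary, so the generous constant absorbs the small cases $n'\le2$.
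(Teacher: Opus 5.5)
Your proposal is correct and is essentially the paper's argument: combine the Euler-formula bound $|E(H)|\le 3n-6+3\gamma$ (valid for $n\ge3$) with the trivial bound $2|E(H)|/n\le n-1$, and split according to the size of $n$. The only differences are cosmetic: you split at $n'-1\le 6+\sqrt{6\gamma}$ where the paper splits at $n\le\sqrt{6\gamma}$, and you weaken $-6$ to $-3$ in the Euler bound.
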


\begin{proof}
Let $H\subseteq G$ be nonempty and put $n=|V(H)|$. The claim is immediate for $n\le2$. For $n\ge3$, the standard edge bound obtained from Euler's formula~\cite{MoharThomassen2001} gives $|E(H)|\le3n-6+3\gamma$, while simplicity gives $2|E(H)|/n\le n-1$. If $n\le\sqrt{6\gamma}$, then $2|E(H)|/n<\sqrt{6\gamma}$. If $n>\sqrt{6\gamma}$, then $2|E(H)|/n\le6-12/n+6\gamma/n<6+\sqrt{6\gamma}$. Taking the maximum over all nonempty $H\subseteq G$ proves the proposition.
\end{proof}

\begin{proof}[Proof of Theorem~\ref{thm:surface}]
We first assume that $\gamma\ge1$. Woodall's theorem~\cite[Theorem~2(a)(i)]{Woodall2011}, stated as Theorem~\ref{thm:woodall}, gives a partition into three sets, each inducing a graph of maximum degree at most $D_\gamma$. By Proposition~\ref{prop:mad-surface}, $\ceil{\mad(G)/2}\le\ceil{3+\frac12\sqrt{6\gamma}}$. Theorem~\ref{thm:defective-main} now gives $\pchi(G)\le\ceil{3+\frac12\sqrt{6\gamma}}+F(D_\gamma)$, which is $O(\sqrt\gamma)$.

For $\gamma=0$, Chen, Mohar and Wu~\cite[Theorem~1.4]{ChenMoharWu2023} proved the stronger bound $\pchi(G)\le14$. Combining the two cases gives $\pchi(G)\le C\sqrt{\gamma+1}$ for an absolute constant $C$.
\end{proof}

\subsection{Final remarks}

The upper and lower bounds proved above imply
\[
   6\le
   \liminf_{D\to\infty}\frac{f(D)}D
   \le
   \limsup_{D\to\infty}\frac{f(D)}D
   \le\frac{48}{5}.
\]
The lower bound $6D+5$ leaves open both the exact asymptotic coefficient and the lower-order term. The proper $3$-coloring case is a separate finite-parameter problem; the current bounds remain $5\le f(0)\le7$.

The algorithmic assertion in Theorem~\ref{thm:defective-main} assumes that the three-part defective coloring is supplied with the input. Finding such a partition is a different algorithmic problem.

\section*{Acknowledgments}

We acknowledge support from the National Natural Science Foundation of China (Nos.~12371348 and 12201258) and the High-Quality Science and Technology Cultivation Project of Jiangsu Normal University (No.~JSNUGZL2026069).

\section*{Declaration of competing interests}

We declare that we have no competing interests.

\section*{Data availability}

We did not generate or analyze any datasets in this study.

\section*{Declaration on the use of generative AI}

The authors used ChatGPT 5.6 Pro to assist in discussing proof strategies, checking proofs, and improving exposition.

\end{document}